\theoremstyle{thmstyleone}%
\newtheorem{thm}{Theorem}%
\theoremstyle{thmstyletwo}%
\theoremstyle{thmstylethree}%
\newtheorem{defn}{Definition}%
\theoremstyle{plain}
\newtheorem*{conjecture*}{Conjecture}
\begin{document}

\title[Finite precision block Lanczos]{On finite precision block Lanczos computations}

%%=============================================================%%
%% GivenName	-> \fnm{Joergen W.}
%% Particle	-> \spfx{van der} -> surname prefix
%% FamilyName	-> \sur{Ploeg}
%% Suffix	-> \sfx{IV}
%% \author*[1,2]{\fnm{Joergen W.} \spfx{van der} \sur{Ploeg}
%%  \sfx{IV}}\email{iauthor@gmail.com}
%%=============================================================%%

\author*[1]{\fnm{Dorota} \sur{\v{S}imonov\'a}}\email{simonova@karlin.mff.cuni.cz}
\equalcont{These authors contributed equally to this work.}

\author[2]{\fnm{Petr} \sur{Tich\'{y}}}\email{ptichy@karlin.mff.cuni.cz}
\equalcont{These authors contributed equally to this work.}

\affil[1-2]{\orgdiv{Faculty of Mathematics and Physics}, \orgname{Charles University}, \orgaddress{\street{Sokolovsk\'{a}~83}, \city{Prague}, \postcode{186 65}, \country{Czech Republic}}}

\abstract{In her seminal 1989 work, Greenbaum demonstrated that the results
produced by the finite precision Lanczos algorithm after $k$ iterations
can be interpreted as exact Lanczos results applied to a larger matrix,
whose eigenvalues lie in small intervals around those of the original
matrix. This establishes a mathematical model for finite precision
Lanczos computations. In this paper, we extend these ideas to the
block Lanczos algorithm. We generalize the continuation process and
show that it can be completed in a finite number of iterations using carefully
constructed perturbations. The block tridiagonal matrices produced
after $k$ iterations can then be interpreted as arising from the
exact block Lanczos algorithm applied to a larger model matrix. We
derive sufficient conditions under which the required perturbations
remain small, ensuring that the eigenvalues of the model matrix stay
close to those of the original matrix. While in the single-vector
case these conditions are always satisfiable, as shown by Greenbaum
based on results by Paige, the question of whether they can always
be satisfied in the block case remains open. Finally, we present numerical
experiments demonstrating a practical implementation of the continuation
process and empirically assess the validity of the sufficient conditions
and the size of the perturbations.
}

\keywords{block Lanczos algorithm, finite precision arithmetic}

\pacs[MSC Classification]{65F10,65F15,65G50}

\maketitle

\bigskip

\section{Introduction}

The Lanczos algorithm is widely used to compute eigenvalue approximations
and to solve linear systems involving a symmetric matrix $A$. Its
behavior has been studied extensively. It can be viewed as the Rayleigh-Ritz
process applied to successive Krylov subspaces. More specifically,
the algorithm computes a sequence of orthogonal restrictions of $A$
onto a sequence of Krylov subspaces with respect to the orthonormal
basis of each subspace. These orthogonal restrictions are represented
by $k\times k$ tridiagonal Jacobi matrices, where $k$ is the iteration
number. The eigenvalues of these matrices, called Ritz values, are
approximations of the eigenvalues of $A$. Alternatively, the Lanczos
algorithm can be viewed as a Stieltjes algorithm for computing orthogonal
polynomials whose roots are equal to the Ritz values. 

The behavior of the Lanczos algorithm can be significantly influenced
by finite precision arithmetic; see, e.g. \cite{MeSt2006}. In particular,
the orthogonality among the computed Lanczos vectors can be lost quickly.
Consequently, clusters of Ritz values that approximate single eigenvalues
may appear. The most extensive analysis of the finite precision Lanczos
algorithm can be found in Paige's doctoral thesis and the series of
papers he published afterward. For example, in~\cite{Pa1980}, the
author analyzed the conditions under which the orthogonality of the
last computed basis vector can be lost. He also showed that the Ritz
values \emph{stabilize} only near the eigenvalues of $A$; here, ``stabilization''
means that, in each subsequent iteration, at least one Ritz value
remains in almost the same position. The approximation properties
of the Lanczos algorithm in finite precision arithmetic were further
studied by Wülling \cite{Wu2006}, building on the earlier work of
Strakoš and Greenbaum \cite{GrSt1992oq}. Among other things, it is
shown that, once a cluster is formed, it closely approximates an eigenvalue
of $A$.

Building on Paige's analysis \cite{Pa1980}, Greenbaum presented a
mathematical model of the finite precision Lanczos algorithm computations
in \cite{Gr1989}. She showed that the \emph{computed} results from
$k$ iterations of the Lanczos algorithm, can be viewed as the results
obtained using the \emph{exact} Lanczos algorithm applied to a larger
matrix with eigenvalues in tiny intervals around the eigenvalues of
$A$. Although the proof is given for intervals of size $\sqrt{\epsilon}\|A\|$,
where $\epsilon$ is the unit roundoff, experiments indicate that
the size can be reduced to $\epsilon\|A\|$. The importance of this
result is supported by an experiment performed in \cite{GrSt1992}
for the conjugate gradient (CG) algorithm, which is closely related
to the Lanczos algorithm. The purpose of the experiment is to compare
the behavior of finite precision CG applied to a given matrix with
that of exact CG applied to a larger matrix whose eigenvalues are
distributed in small intervals around the original matrix's eigenvalues.
The right-hand side of the larger system is constructed from the original
right-hand side so that the sum of the weights corresponding to a
cluster equals the original weight; see \cite{GrSt1992} for more
details on the construction. The authors demonstrate that the behavior
of finite precision CG is numerically very similar to that of exact
CG applied to the blurred system when the intervals are comparable
in size to $\epsilon\|A\|$. These results support the idea that the
Lanczos and CG algorithms are backward stable in the aforementioned
sense (a property we refer to as \emph{backward-like stability}).

The block Lanczos algorithm (see \cite{GoUn1977} or \cite{GoLo2013})
is a version of the Lanczos algorithm that works with block vectors.
The basic properties of the algorithm are summarized, e.g., in Schmelzer's
PhD thesis \cite{Sch2004}. The block Lanczos algorithm takes the
advantage of block operations on modern computer architectures. Moreover,
it builds a richer space which, in theory, could result in faster
convergence of the Ritz values to the eigenvalues. It can also be
used to detect multiple eigenvalues of $A$; however, reorthogonalization
must be used in this case. It seems that without reorthogonalization,
the finite precision block Lanczos algorithm still approximates the
original eigenvalues. Nevertheless, it is then impossible to distinguish
between the approximation of multiple eigenvalues and clusters caused
by rounding errors. 

The finite precision behavior of the block Lanczos algorithm is not
well understood. A few papers briefly discuss this topic, e.g., \cite{GrLeSi1994}
and \cite{XuCh2022}. However, there is no analysis that generalizes
the results of Paige \cite{Pa1980} and Greenbaum \cite{Gr1989}.
To our knowledge, the first attempt at this kind of analysis was started
by Carson and Chen \cite{CaCh}.

The aim of this paper is to study the behavior of the finite precision
block Lanczos algorithm and generalize ideas of Greenbaum about the
mathematical model of finite precision Lanczos computations to the
block case. Generalizing Paige's analysis to the block case is challenging
and requires further research. This paper presents experimental evidence
supporting conjectures and observations that could help with a generalization
approach.

In Section~\ref{sec:The-block-Lanczos}, we introduce the block Lanczos
algorithm and discuss analogies to properties known for the Lanczos
algorithm. To motivate our research on how to mathematically model
the behavior of the block Lanczos algorithm in finite precision arithmetic,
we present a block analogy of Greenbaum and Strakoš's experiment (see
\cite{GrSt1992}) in Section~\ref{sec:Exact-block-CG}. Section~\ref{sec:The-finite-precision}
summarizes what is known about the behavior of the block Lanczos algorithm
in finite precision arithmetic. Section~\ref{sec:FPmodel} presents
the main contribution: a generalization of the construction of Greenbaum's
model of finite precision computations for the block Lanczos algorithm
along with a heuristic strategy for determining the parameters to
obtain a model with desired properties. The final section presents
numerical experiments that support the results of Section~\ref{sec:FPmodel}. 
%The MATLAB codes related to the numerical experiments in Sections \ref{sec:The-finite-precision} and \ref{sec:Experiments} are available on GitHub\footnote{https://github.com/dorotasimonova/On-finite-precision-block-Lanczos-computations.git}.

In this paper, we will refer to the Lanczos algorithm for vectors
as the \textit{single-vector}\emph{ }\textit{\emph{Lanczos algorithm}}.
Unless otherwise stated, all norms are assumed to be 2-norms. $I_{p}$
and $0_{p}$ stands for the identity and the zero matrix, respectively,
of size $p\times p$. Throughout the paper $\epsilon$ is the unit
roundoff.

\section{The block Lanczos algorithm \protect\label{sec:The-block-Lanczos}}

Given a symmetric matrix $A\in\mathbb{R}^{n\times n}$ and a block
vector $v\in\mathbb{R}^{n\times p}$, we can define the $k$th block
Krylov subspace
\[
\mathcal{K}_{k}(A,v)=\mathrm{colspan}\{v,\ldots,A^{k-1}v\},
\]
where ``colspan'' is used to specify the span of individual columns.
Denoting the individual columns of $v$ as $v=[v^{(1)},\ldots,v^{(p)}]$,
it holds that
\[
\mathcal{K}_{k}(A,v)=\mathcal{K}_{k}(A,v^{(1)})+\mathcal{K}_{k}(A,v^{(2)})+\ldots+\mathcal{K}_{k}(A,v^{(p)}).
\]
Therefore, the block Krylov subspace to which we apply the Rayleigh-Ritz
procedure contains more information than the single-vector Krylov
subspaces for each column of~$v$. For simplicity, we will assume
that \textcolor{black}{$\mathrm{dim}\,\mathcal{K}_{k}(A,v)=kp$ for
$k=1,2,\ldots$.}

The block Lanczos algorithm, Algorithm~\ref{alg:bLanczos}, generates
a sequence of orthonormal block vectors $v_{i}\in\mathbb{R}^{n\times p}$,
which means that $v_{i}^{T}v_{j}=I_{p}$ when $i=j$ and $v_{i}^{T}v_{j}=0_{p}$
when $i\neq j$. The block vectors $v_{1},\ldots,v_{k}$ are called
the \textit{block Lanczos vectors} and the columns of these block
vectors form a basis of the corresponding block Krylov subspace. 

\begin{algorithm}[htbp!]
\caption{Block Lanczos \protect\label{alg:bLanczos}}
\begin{algorithmic}[1]

\Require{$A$, $v$}

\State{$v_{0}=0$}

\State{$v_{1}\beta_{1}=v$ } \label{alg:lineQR1}

\For{$k=1,2,\ldots$}

\State{$w=Av_{k}-v_{k-1}\beta_{k}^{T}$}

\State{$\alpha_{k}=v_{k}^{T}w$}

\State{$w=w-v_{k}\alpha_{k}$}

\State{$v_{k+1}\beta_{k+1}=w$} \label{alg:lineQR2}

\EndFor

\end{algorithmic}
\end{algorithm}
On lines \ref{alg:lineQR1} and \ref{alg:lineQR2} of Algorithm~\ref{alg:bLanczos},
the block vector $v_{k+1}$ and the block $\beta_{k+1}\in\mathbb{R}^{p\times p}$
are determined using QR factorization, so that the blocks $\beta_{i}$
are upper triangular matrices. The block vectors and blocks generated
by the block Lanczos algorithm satisfy the relation
\begin{equation}
AV_{k}=V_{k}T_{k}+v_{k+1}\beta_{k+1}e_{k}^{T},\label{eq:bLanczos}
\end{equation}
where $e_{k}^{T}=\left[0_{p},\ldots,0_{p},I_{p}\right]\in\mathbb{R}^{p\times kp}$,
$V_{k}=\left[v_{1},\ldots,v_{k}\right]$ and
\[
T_{k}=\left[\begin{array}{cccc}
\alpha_{1} & \beta_{2}^{T}\\
\beta_{2} & \ddots & \ddots\\
 & \ddots & \ddots & \beta_{k}^{T}\\
 &  & \beta_{k} & \alpha_{k}
\end{array}\right]\in\mathbb{R}^{kp\times kp}
\]
is symmetric and block tridiagonal. 

Multiplying (\ref{alg:bLanczos}) by $V_{k}^{T}$ from the left yields
\[
V_{k}^{T}(V_{k}V_{k}^{T}A)V_{k}=T_{k},
\]
so that $T_{k}$ can be seen as the representing matrix of the orthogonal
restriction of $A$ onto $\mathcal{K}_{k}(A,v)$ with respect to $V_{k}$.
The eigenvalues of $T_{k}$, so-called \textit{Ritz values}, then
approximate the eigenvalues of $A$. Since we assume that the corresponding
block Krylov subspace has full dimension, there are no rank deficiency
problems within the blocks. If $n$ is divisible by $p$, then \textcolor{black}{the
algorithm finishes in the last iteration }$s=\frac{n}{p}$ \textcolor{black}{with
}$\beta_{s+1}=0$, so the Ritz values of $T_{s}$ become a subset
of the eigenvalues of $A$. Note that the general case is more complicated
and may require deflation techniques, such as those based on rank-revealing
QR factorizations.

Let 
\begin{equation}
T_{k}=S_{k}\Theta_{k}S_{k}^{T}\quad\text{with}\quad S_{k}^{T}S_{k}=I_{kp},\label{def:spectral_decomp-T}
\end{equation}
be the spectral decomposition of $T_{k}$, where
\[
S_{k}=\left[s_{1}^{(k)},\ldots,s_{kp}^{(k)}\right]\quad\mbox{and}\quad\Theta_{k}=\mathrm{diag}\left(\theta_{1}^{(k)},\ldots,\theta_{kp}^{(k)}\right).
\]
Multiplying (\ref{eq:bLanczos}) by $S_{k}$ from the right yields
\begin{equation}
AZ_{k}=Z_{k}\Theta_{k}+v_{k+1}\beta_{k+1}\sigma_{p}^{(k)},\label{eq:RitzVecs}
\end{equation}
where $Z_{k}=V_{k}S_{k}$ and $\sigma_{p}^{(k)}=e_{k}^{T}S_{k}$,
\[
Z_{k}=\left[z_{1}^{(k)},\ldots,z_{kp}^{(k)}\right],\quad\sigma_{p}^{(k)}=\left[\sigma_{p,1}^{(k)},\ldots,\sigma_{p,kp}^{(k)}\right].
\]
Taking the $i$th column on both the left and right sides of (\ref{eq:RitzVecs})
we obtain
\begin{equation}
Az_{i}^{(k)}=\theta_{i}^{(k)}z_{i}^{(k)}+v_{k+1}\beta_{k+1}\sigma_{p,i}^{(k)}.\label{eq:RitzVec}
\end{equation}
The vectors $z_{i}^{(k)}=V_{k}s_{i}^{(k)}$, corresponding to the
Ritz values $\theta_{i}^{(k)}$, are called \textit{Ritz vectors}.
The duplets $(\theta_{i}^{(k)},z_{i}^{(k)})$, or Ritz pairs, approximate
the eigenpairs of~$A$. Using the relation (\ref{eq:RitzVec}) we
can estimate the quality of the approximation provided by a given
Ritz pair. It can be easily shown that
\begin{equation}
\min_{j=1,...,k}|\lambda_{j}-\theta_{i}^{(k)}|\leq\frac{\|Az_{i}^{(k)}-\theta_{i}^{(k)}z_{i}^{(k)}\|}{\|z_{i}^{(k)}\|}=\|\beta_{k+1}\sigma_{p,i}^{(k)}\|\equiv\delta_{k,i}.\label{eq:deltaBound}
\end{equation}
The quality of the eigenvalue approximation can therefore be bounded
by $\delta_{k,i}$. 

\subsection{Interlacing}

Using the classical result known from the theory of orthogonal polynomials,
one can shown that the Ritz values from two successive iterations
of the single-vector Lanczos algorithm are strictly interlaced. In
this section we summarize what is known about Ritz values in the block
case. 

Using the general results on eigenvalue interlacing, we can derive
the interlacing principle for two consecutive symmetric block tridiagonal
matrices, $T_{k}$ and $T_{k+1}$, generated by the block Lanczos
algorithm. In particular, considering the spectral decompositions
of $T_{k}$ and $T_{k+1}$ as in (\ref{def:spectral_decomp-T}) and
assuming that $\beta_{j+1}$, $j=1,\ldots,k$, are of full rank, we
can deduce from \cite[p.246]{HoJo2013} that
\begin{equation}
\begin{aligned}\theta_{i}^{(k)}<\theta_{i+p}^{(k+1)}<\theta_{i+p}^{(k)}, & \quad i=1,\ldots,(k-1)p,\\
\theta_{1}^{(k+1)}<\theta_{1}^{(k)}, & \quad\theta_{kp}^{(k)}<\theta_{(k+1)p}^{(k+1)}.
\end{aligned}
\label{eq:interlacingprinc}
\end{equation}

In other words, every open interval formed by $p+1$ consecutive Ritz
values of $T_{k}$ contains at least one Ritz value of $T_{k+1}$.
The assumption of full rank of $\beta$'s is crucial for the strictness
of the inequalities. In the single-vector case, however, the property
is even stronger: between any two consecutive Ritz values from a given
iteration, there is at least one Ritz value from each subsequent iteration.
Considering the last iteration, there is trivially at least one eigenvalue
of $A$ in each interval, see \cite{Sz1975,LiSt2012}. To the best
of our knowledge, there is no result generalizing this property to
symmetric block tridiagonal matrices. The following conjecture proposes
such a generalization.
\begin{conjecture*}
Let $A\in\mathbb{R}^{n\times n}$ be a symmetric matrix, $v\in\mathbb{R}^{n\times p}$
be a block vector. Let $s$ be the largest index such that $\mathcal{K}_{s}(A,v)$
has full dimension. Let $T_{k}$, with spectral decomposition (\ref{def:spectral_decomp-T}),
be the symmetric block tridiagonal matrix generated in the $k$th
iteration of the block Lanczos algorithm applied to $A$ and $v$,
where $0<k<s$. Then each open interval
\[
\begin{aligned}(\theta_{i}^{(k)},\theta_{i+p}^{(k)}), & \quad i=1,\ldots,(k-1)p,\end{aligned}
\]
contains at least one Ritz value of $T_{j}$ for $k<j\leq s$.
\end{conjecture*}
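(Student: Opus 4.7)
The plan is to fix $k$ and argue by induction on $j \in (k, s]$. The base case $j = k+1$ is immediate from (\ref{eq:interlacingprinc}): the Ritz value $\theta_{i+p}^{(k+1)}$ lies strictly in $(\theta_{i}^{(k)}, \theta_{i+p}^{(k)})$. For the inductive step, suppose $T_j$ has $N_j \ge 1$ Ritz values in this interval, occupying consecutive indices $m, m+1, \ldots, m+N_j-1$. Applying (\ref{eq:interlacingprinc}) between $T_j$ and $T_{j+1}$ with index $m-1$ (using $\theta_{0}^{(j)} = -\infty$ when $m = 1$) gives $\theta_{m+p-1}^{(j+1)} > \theta_{m-1}^{(j)}$, and with index $m+N_j-1$ gives $\theta_{m+N_j-1}^{(j+1)} < \theta_{m+N_j-1}^{(j)} < \theta_{i+p}^{(k)}$. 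I would then try to combine these one-step bounds via a case analysis on whether $\theta_m^{(j+1)}$ or $\theta_{m+p}^{(j+1)}$ themselves land inside the interval, and, if not, exploit the monotonicity of $\theta_m^{(j+1)} \le \theta_{m+1}^{(j+1)} \le \cdots \le \theta_{m+p}^{(j+1)}$ to locate an index $q$ with $\theta_q^{(j+1)} \in (\theta_i^{(k)}, \theta_{i+p}^{(k)})$.

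The main obstacle is that this one-step induction does not close cleanly when $N_j$ is small: the block interlacing (\ref{eq:interlacingprinc}) only controls shifts of Ritz values by $p$ index positions per iteration, and the interval itself already spans a gap of $p$ positions in the spectrum of $T_k$, so in principle one could have $\theta_m^{(j+1)} \le \theta_i^{(k)}$ and simultaneously $\theta_{m+p}^{(j+1)} \ge \theta_{i+p}^{(k)}$ without violating any inequality available from (\ref{eq:interlacingprinc}), leaving the interval potentially empty of $T_{j+1}$-Ritz values. A more promising route, and the one I would ultimately pursue, is to work within the framework of matrix-valued orthogonal polynomials. The block Lanczos three-term recurrence defines a sequence $\{P_j(t)\}$ of $p \times p$ matrix orthogonal polynomials with respect to the matrix-valued spectral measure induced by $A$ and $v$, and the Ritz values at iteration $j$ are exactly the roots of $\det P_j$. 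The scalar analogue of the conjecture --- that every open interval between two consecutive zeros of $p_k$ contains at least one zero of every later $p_j$ --- is a classical Markov-type interlacing result that follows from the Sturm-sequence structure of the scalar three-term recurrence combined with positivity of the underlying inner product.

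The hard part will be formulating the correct block analogue of this Markov-type statement. In the scalar case, nonvanishing subdiagonal coefficients together with positivity of the inner product yield a Sturm sequence whose sign changes count zeros in any interval; in the block setting one must replace scalar positivity by Loewner positivity of the matrix-valued inner product and handle matrix coefficients $\alpha_k$ and $\beta_k$, with the full-rank assumption on the $\beta_k$'s playing the role of the nonvanishing subdiagonal. I expect the argument will ultimately track the signs of $\det P_j(\theta_i^{(k)})$ and $\det P_j(\theta_{i+p}^{(k)})$ as $j$ varies and extract a zero in the interval from a sign change; making this into a rigorous block Sturm-sequence argument, or finding a different structural route, is precisely the open technical point behind the conjecture.
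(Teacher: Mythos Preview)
The paper does not prove this statement: it is explicitly presented as a \emph{conjecture}, and the only support offered is numerical (``We have tested this conjecture numerically on several examples, and it was confirmed in all cases''). There is therefore no paper proof to compare against.

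Your proposal is honest about this state of affairs. You correctly identify the obstruction in the naive inductive approach: the one-step block interlacing (\ref{eq:interlacingprinc}) only pins down Ritz values up to a shift of $p$ index positions, and since the target interval $(\theta_i^{(k)},\theta_{i+p}^{(k)})$ itself spans exactly $p$ positions, the inequalities from (\ref{eq:interlacingprinc}) alone cannot rule out the scenario where all $T_{j+1}$-Ritz values ``jump over'' the interval. Your suggested alternative route via matrix-valued orthogonal polynomials and a block Sturm-sequence argument is natural and is indeed the direction one would expect a proof to take, but as you yourself note in the final sentence, making this rigorous is precisely the open technical content of the conjecture. So your write-up is not a proof but rather a sketch of where the difficulty lies and what machinery a proof would likely require --- which is an accurate assessment, and consistent with the paper's own position that the statement remains open.
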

Note that, under the assumptions of the conjecture, the following
two inequalities follow trivially from (\ref{eq:interlacingprinc}):
$\theta_{1}^{(j)}<\theta_{1}^{(k)},$ $\theta_{kp}^{(k)}<\theta_{jp}^{(j)}$. 

We have tested this conjecture numerically on several examples, and
it was confirmed in all cases. 

\subsection{Improper clusters \protect\label{subsec:Clusters-not-approximating}}

For the single-vector Lanczos algorithm, it was shown in \cite{Wu2006}
that if a cluster of Ritz values appears, then it must approximate
an eigenvalue of the original matrix. However, this does not have
to be true for the block Lanczos algorithm, as we will see in this
section.

First, we present a theoretical example, inspired by \cite[p.217]{HnPl2015},
which implies the existence of clusters that do not approximate any
eigenvalue of the original matrix. Suppose we have a sequence of symmetric
tridiagonal matrices 
\[
\widetilde{T}_{k}=\left[\begin{array}{cccc}
\widetilde{\alpha}_{1} & \widetilde{\beta}_{2}\\
\widetilde{\beta}_{2} & \ddots & \ddots\\
 & \ddots & \ddots & \widetilde{\beta}_{k}\\
 &  & \widetilde{\beta}_{k} & \widetilde{\alpha}_{k}
\end{array}\right],\quad k=1,\ldots,
\]
where $\widetilde{\alpha}_{i},\widetilde{\beta}_{i+1}\in\mathbb{R}$
and $\widetilde{\beta}_{i+1}>0$, associated with the single-vector
Lanczos algorithm applied to a symmetric matrix $B\in\mathbb{R}^{s\times s}$
and an initial vector $y\in\mathbb{R}^{s}$. As mentioned above, every
open interval defined by two consecutive Ritz values of $\widetilde{T}_{k}$
contains at least one Ritz value of $\widetilde{T}_{j}$, where $j>k$.
Let $s$ be the smallest index such that $\mathrm{dim}\,\mathcal{K}_{s}(B,y)=\mathrm{dim}\,\mathcal{K}_{s+1}(B,y)$,
and assume that $s\gg p>1$. Define the matrix $T_{s}$ by
\[
T_{s}\equiv\widetilde{T}_{s}\otimes I_{p}=\left[\begin{array}{cccc}
\widetilde{\alpha}_{1}I_{p} & \widetilde{\beta}_{2}I_{p}\\
\widetilde{\beta}_{2}I_{p} & \ddots & \ddots\\
 & \ddots & \ddots & \widetilde{\beta}_{s}I_{p}\\
 &  & \widetilde{\beta}_{s}I_{p} & \widetilde{\alpha}_{s}I_{p}
\end{array}\right],
\]
where $\otimes$ denotes the Kronecker product. Applying the block
Lanczos algorithm to $T_{s}$ and the block vector $e_{1}\otimes I_{p}$,
where $e_{1}\in\mathbb{R}^{s}$ is the first column of the identity
matrix $I_{s}$, yields a sequence of matrices
\[
T_{k}=\widetilde{T}_{k}\otimes I_{p},\quad k=1,\ldots,s.
\]
The matrices $T_{k}$ have the same spectra as $\widetilde{T}_{k}$,
except that each eigenvalue has multiplicity $p$. It also follows
from \cite{HnPl2015} that $T_{k}$ can have eigenvalues of multiplicity
at most~$p$. 
%In the above process, we can use arbitrarily small perturbations
%to produce clusters of Ritz values that do not approximate any eigenvalue
%of~$T_{s}$. 

In the above process, arbitrarily small perturbations of $T_{s}$
can be introduced. When the block Lanczos algorithm is applied to the perturbed 
$T_{s}$, it tends to produce clusters of Ritz values that, 
in general, do not approximate any eigenvalue of the underlying matrix.
For more details on the construction of such a perturbed matrix, see Section~\ref{subsec:ConstructionW}.

We now recall the definition of a cluster and, taking into account
the above considerations, give a name to the new type of cluster.
\begin{defn}
\label{def:clusters} A Ritz value $\theta_{i}^{(k)}$ is said to
be \textit{in a cluster }if
\[
\min_{j\neq i}\frac{|\theta_{i}^{(k)}-\theta_{j}^{(k)}|}{\|A\|}\leq\psi,
\]
for a small $\psi>0$. Otherwise it is said to be \textit{well-separated}
(or just \textit{separated}). A cluster with endpoints $\theta_{min}^{cl},\theta_{max}^{cl}$
increasingly sorted is said to be a \textit{proper cluster} if there
is an eigenvalue $\lambda_{i}$ of $A$ such that
\[
\theta_{min}^{cl}-\eta\|A\|\leq\lambda_{i}\leq\theta_{max}^{cl}+\eta\|A\|,
\]
for a small $\eta>0$. In the other case it is said to be an \textit{improper
cluster}.
\end{defn}

In our experience, improper clusters seem to be related to matrices
with specific eigenvalue distributions. For example, they appear for
the perturbed matrices $\widetilde{T}_{k}\otimes I_{p}$ mentioned above. However,
they rarely appear in experiments with some other matrices. A more
detailed analysis of the appearance of improper clusters is beyond
the scope of this paper, and will be discussed in more detail in \cite{Si}. 

\section{Exact block CG for a blurred problem\protect\label{sec:Exact-block-CG}}

In the single-vector case, a well-known one-to-one correspondence
exists between the Lanczos and CG algorithms. A more complicated analogous
correspondence can also be found between the block Lanczos and block
CG algorithms; see, e.g., \cite{Sa1987,BiFr2014,GuJbSa2004,TiMeSi2025}.
Based on this fact, the goal of this section
is to present an experiment similar to the one in \cite[p.127]{GrSt1992},
which supports the idea of backward-like stability of the Lanczos
and CG algorithms in the single-vector case. We aim to explore whether
an analogous result can be expected in the block case.

As mentioned earlier, Greenbaum showed in \cite{Gr1989} that the
results of finite precision single-vector Lanczos computations can
be viewed as the results of the exact Lanczos algorithm applied to
a larger matrix whose eigenvalues lie within tiny intervals around
the eigenvalues of the original matrix. She presented a particular
way of constructing such a larger matrix. The purpose of the experiment
in \cite{GrSt1992} was to demonstrate that the finite precision CG
behavior mimics the exact CG behavior when applied to a class of matrices
with eigenvalues in tiny intervals around the eigenvalues of $A$.
The right-hand side of the blurred system is constructed from the
original right-hand side vector, as described below.

Our experiment aims to compare the results of finite precision computations
of block CG applied to $Ax=b$ with the results of exact computations
applied to a larger problem, $\hat{A}\hat{x}=\hat{b}$. Let $\lambda_{1},\ldots,\lambda_{n}$
be the eigenvalues of $A$ and $b=\left[b^{(1)},\ldots,b^{(p)}\right]$.
The larger matrix, 
\[
\hat{A}=\text{diag}(\lambda_{1,1},\ldots,\lambda_{1,m},\lambda_{2,1},\ldots,\lambda_{2,m},\ldots,\lambda_{n,1},\ldots,\lambda_{n,m}),
\]
is defined to have $m$ eigenvalues that are uniformly distributed
around each of $A$'s within an interval of width $\delta$, 
\begin{align*}
\lambda_{i,j}=\lambda_{i}+\frac{j-\frac{m+1}{2}}{m-1}\delta, & \quad j=1,\ldots,m.
\end{align*}
The block right-hand side $\hat{b}=[\hat{b}^{(1)},\ldots,\hat{b}^{(p)}]$
of the larger problem is defined using $b$ as follows: for
each column 
\[
\hat{b}^{(i)}=[\hat{b}_{1,1}^{(i)},\ldots,\hat{b}_{1,m}^{(i)},\ldots,\hat{b}_{n,1}^{(i)},\ldots,\hat{b}_{n,m}^{(i)}]^{T}
\]
the elements satisfy 
\[
\hat{b}_{j,1}^{(i)}=\ldots=\hat{b}_{j,m}^{(i)}\quad\text{and}\quad\sum_{t=1}^{m}\left(\hat{b}_{j,t}^{(i)}\right)^{2}=(y_{j}^{T}b^{(i)})^{2},\quad j=1,\ldots,n,
\]
where $Y=\left[y_{1},\ldots,y_{n}\right]$ is the orthonormal matrix
of eigenvectors of $A$. We will compare a quantity analogous to the
relative $A$-norm of error in the single-vector case, which is defined
as
\begin{equation}
\frac{\sqrt{\text{trace}\left((x_{*}-x)^{T}A(x_{*}-x)\right)}}{\sqrt{\text{trace}\left((x_{*}-x_{0})^{T}A(x_{*}-x_{0})\right)}},\label{eq:Anorm}
\end{equation}
where $x_{*}$ is the exact block solution and $x_{0}$ is the initial
guess, which is always the zero block vector in our experiments.

\begin{algorithm}[htbp!]
\caption{O'Leary block CG \protect\label{alg:bCGOLeary}}
\begin{algorithmic}[1]

\Require{$A$, $b$, $x_{0}$}

\State{$r_{0}=b-Ax_{0}$}

\State{$p_{0}=r_{0}\phi_{0}$ }

\For{$k=1,2,\ldots$}

\State{$\gamma_{k-1}=(p_{k-1}^{T}Ap_{k-1})^{-1}\phi_{k-1}^{T}r_{k-1}^{T}r_{k-1}$}

\State{$x_{k}=x_{k-1}+p_{k-1}\gamma_{k-1}$}

\State{$r_{k}=r_{k-1}-Ap_{k-1}\gamma_{k-1}$}

\State{$\delta_{k}=\phi_{k-1}^{-1}(r_{k-1}^{T}r_{k-1})^{-1}r_{k}^{T}r_{k}$}

\State{$p_{k}=(r_{k}+p_{k-1}\delta_{k})\phi_{k}$}

\EndFor

\end{algorithmic}
\end{algorithm}

\begin{algorithm}[th]
\caption{Dubrulle-R block CG \protect\label{alg:BCG_Dubrulle}}

\begin{algorithmic}[1]

\Require{$A$, $b$, $x_{0}$}

\State{$r_{0}=b-Ax_{0}$}

\State{{[}$w_{0},\sigma_{0}]=\mathrm{\mathtt{qr}}(r_{0})$}

\State{$s_{0}=w_{0}$}

\For{$k=1,2,\dots$}

\State{$\xi_{k-1}=\left(s_{k-1}^{T}As_{k-1}\right)^{-1}$}

\State{$x_{k}=x_{k-1}+s_{k-1}\xi_{k-1}\sigma_{k-1}$}

\State{$w=w_{k-1}-As_{k-1}\xi_{k-1}$}

\State{$[w_{k},\zeta_{k}]=\mathrm{\mathtt{qr}}(w)$}

\State{ $s_{k}=w_{k}+s_{k-1}\zeta_{k}^{T}$}

\State{$\sigma_{k}=\zeta_{k}\sigma_{k-1}$}

\EndFor

\end{algorithmic}
\end{algorithm}

The experiment is carried out for two variants of block CG. The first
variant is an algorithm analogous to the Hestenes and Stiefel version
of single-vector CG, as introduced by O'Leary in \cite{OL1980}; see
Algorithm~\ref{alg:bCGOLeary} (HS-BCG). In this algorithm, $\phi_{i}$
is a nonsingular matrix that can be used as a scaling parameter for
the direction vectors. In our experiment, we choose
$\phi_{i}=I_{p}$. The second variant was proposed by Dubrulle in
\cite{Du2001}; see Algorithm~\ref{alg:BCG_Dubrulle} (DR-BCG). This
variant avoids problems with possible rank deficiency within block
vectors. As explained in \textcolor{black}{\cite{TiMeSi2025}}, this
should be the preferred variant of block CG for practical computations.
The exact arithmetic is simulated using double reorthogonalization
of the block vectors $w_{k}$ in DR-BCG. More specifically, the block
vector $w$ is twice orthogonalized againts the previous block vectors
$w_{j}$, $j=0,\dots,k-1$.

For numerical testing, we use the matrix $A=\texttt{bcsstk03}$, a
$112\times112$ matrix from the SuiteSparse Matrix Collection\footnote{https://sparse.tamu.edu},
and $b=\texttt{randn(n,p)}$. The experiment is performed with $p=2$
and $m=11$, using the zero initial guess. First, we apply finite
precision HS-BCG and DR-BCG to $Ax=b$. Then, we apply exact DR-BCG
to larger systems $\hat{A}\hat{x}=\hat{b}$ for two convenient choices
of the parameter~$\delta$.

\begin{figure}
\centering\includegraphics[width=0.7\textwidth]{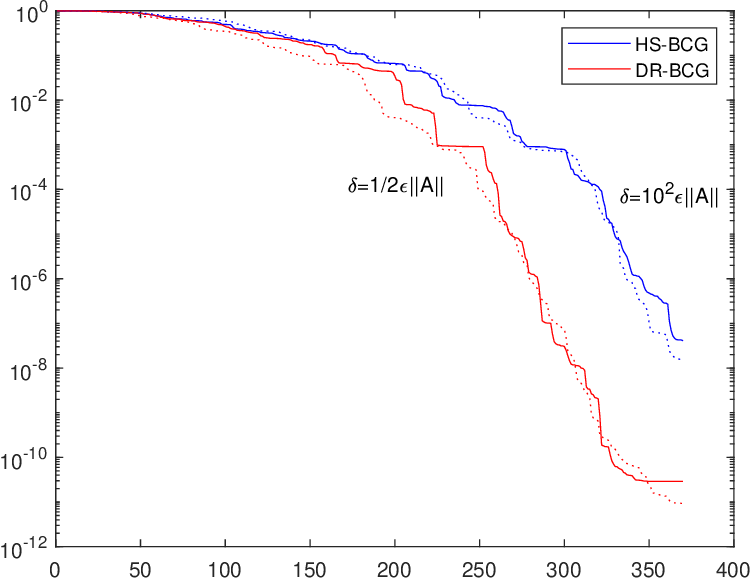}\caption{The quantity (\ref{eq:Anorm}) for finite precision HS-BCG and DR-BCG
applied to $Ax=b$, where $A$ is $\texttt{bcsstk03}$, and for exact
BCG applied to $\hat{A}\hat{x}=\hat{b}$ for $\delta=10^{2}\epsilon\|A\|$
and $\delta=\frac{1}{2}\epsilon\|A\|$. \protect\label{img:experiment}}
\end{figure}

Figure~\ref{img:experiment} shows the quantity (\ref{eq:Anorm})
for finite precision HS-BCG (solid blue) and finite precision DR-BCG
(solid red), both of which are applied to the system $Ax=b$. As expected,
DR-BCG converges faster in finite precision arithmetic because it
avoids rank deficiency problems. Now, we construct the system $\hat{A}\hat{x}=\hat{b}$
with the parameter $\delta=10^{2}\epsilon\|A\|$, apply exact BCG
to it, and plot the quantity (\ref{eq:Anorm}) (blue dotted). Finally,
we construct the system $\hat{A}\hat{x}=\hat{b}$ with the parameter
$\delta=\frac{1}{2}\epsilon\|A\|$ and again apply exact BCG to it
(red dotted). We observe that the convergence curves of the finite-precision
HS-BCG and DR-BCG algorithms closely resemble the exact convergence
curves of BCG applied to their respective model systems. We also performed
the same experiment for $p=3,4$, and $5$. We used slightly different
constants to tune the parameter $\delta$ and obtained very similar
results. Our experiments confirm that the behavior of the finite precision
block CG is similar to that of the exact block CG when applied to
a problem with a matrix whose eigenvalues lie in intervals of size
comparable to $\epsilon\|A\|$ around the eigenvalues of the original
matrix $A$. This experiment motivates our further research and brings
hope that Greenbaum's results on the backward-like stability of the
single-vector Lanczos algorithm may also apply (under some assumptions)
to the block Lanczos algorithm.

\section{The finite precision block Lanczos algorithm \protect\label{sec:The-finite-precision}}

As mentioned previously, we are unaware of any generalization of Paige's
analysis \cite{Pa1980} that would explain the finite precision behavior
of the block Lanczos algorithm. Such a generalization appears to be
non-trivial and is beyond the scope of this paper. This section summarizes
some basic properties of the quantities computed by the block Lanczos
algorithm in finite precision arithmetic. These results will be used
in the next section. 

In the following, the expression 
\[
\mathcal{O}(z),\quad z>0,
\]
refers to an unspecified number whose size can be bounded by $z$
and a constant that may depend on small powers of $n$ (the size of
the problem), $p$ (the width of the block vectors), and $k$ (the
iteration number). 

The computed block Lanczos vectors satisfy a perturbed recurrence
relation (\ref{eq:bLanczos}), which can be written as
\begin{equation}
AV_{k}=V_{k}T_{k}+v_{k+1}\beta_{k+1}e_{k}^{T}+\Delta V_{k},\label{eq:perturbedBLanczos}
\end{equation}
where $\Delta V_{k}=\left[\Delta v_{1},\ldots,\Delta v_{k}\right]\in\mathbb{R}^{n\times kp}$
represents the perturbations due to computations in finite precision
arithmetic. Analogously to the single-vector case, it can be shown
that the size of the perturbations is bounded by 
\begin{equation}
\|\Delta v_{j}\|\leq\mathcal{O}(\epsilon)\|A\|,\quad j=1,\ldots,k.\label{eq:FPperturbations}
\end{equation}
Furthermore, when using Householder QR factorization on lines \ref{alg:lineQR1}
and \ref{alg:lineQR2} of Algorithm~\ref{alg:bLanczos}, the vectors
within a block Lanczos vector are almost exactly orthonormal
\begin{equation}
\|v_{j+1}^{T}v_{j+1}-I_{p}\|\leq\mathcal{O}(\epsilon),\label{eq:FPproperties-normbLanczvecs}
\end{equation}
 and the local orthogonality is also well preserved, i.e.
\begin{eqnarray}
\|v_{j}^{T}v_{j+1}\beta_{j+1}\| & \leq & \mathcal{O}(\epsilon)\|A\|,\quad j=0,\ldots,k.\label{eq:FPproperties-localOG}
\end{eqnarray}
 Finally, it can be shown that 

\begin{equation}
\|\beta_{k+1}\|\leq\mathcal{O}(1)\|A\|.\label{eq:FPbeta}
\end{equation}
The bounds (\ref{eq:FPperturbations}), (\ref{eq:FPproperties-localOG})
and (\ref{eq:FPbeta}) 
have been shown by Carson and Chen \cite{CaCh} and
%\textcolor{magenta}{are based on \cite{CaCh} and }
will be explained in more detail in \cite{Si}. To demonstrate
the validity of the bounds, at least numerically, we present an experiment
in which we plot the actual sizes of the norms from (\ref{eq:FPperturbations})
and (\ref{eq:FPproperties-localOG}).

We consider the matrices introduced in \cite{St1991}, with the eigenvalues
\[
\lambda_{i}=\lambda_{1}+\frac{i-1}{n-1}(\lambda_{n}-\lambda_{1})\rho^{n-i},\quad i=2,\ldots,n,
\]
where $\rho\in(0,1)$ is a density parameter. The matrix $A$ is then
set to $U\Lambda U^{T}$, where $U$ is a random orthonormal matrix
and $\Lambda=\text{diag}(\lambda_{1},\ldots,\lambda_{n})$. In all
experiments, we use the parameters $n=48$ and $\rho=0.8$. For clarity,
we refer to the matrix with $\lambda_{1}=0.1$ and $\lambda_{n}=100$
as $\texttt{strakos48(0.1,100)}$, and the one with $\lambda_{1}=0.001$
and $\lambda_{n}=1$ as $\texttt{strakos48(0.001,1)}$. We choose
$p=2$ and set $v=\texttt{randn(n,p)}$. Finally, we apply the block
Lanczos algorithm to $A$ and $v$. 

\begin{figure}
\includegraphics[width=0.48\textwidth]{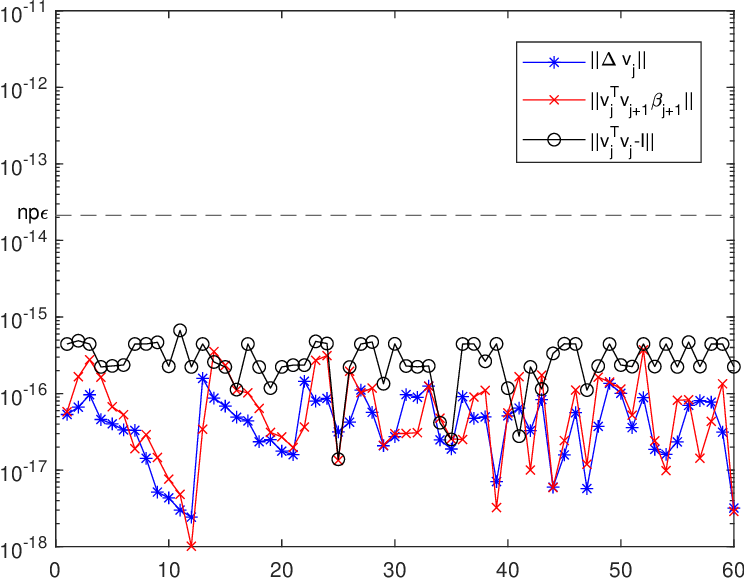}\includegraphics[width=0.48\textwidth]{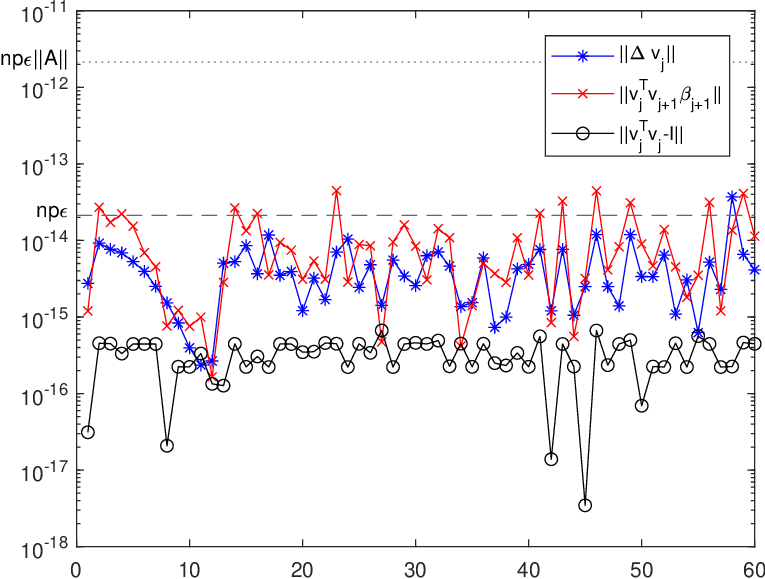}\caption{The terms in (\ref{eq:FPperturbations}), (\ref{eq:FPproperties-normbLanczvecs}),
and (\ref{eq:FPproperties-localOG}) for $A=\texttt{strakos48(0.001,1)}$
(left) and $A=\texttt{strakos48(0.1,100)}$ (right). \protect\label{img:experiment2}}
\end{figure}

In Figure~\ref{img:experiment2} we plot the terms $\|\Delta v_{j}\|$,
$\|v_{j}^{T}v_{j}-I_{p}\|$, and $\|v_{j}^{T}v_{j+1}\beta_{j+1}\|$,
which appear on the left-hand sides of inequalities (\ref{eq:FPperturbations}),
(\ref{eq:FPproperties-normbLanczvecs}), and (\ref{eq:FPproperties-localOG}),
respectively. For a better overview and comparison, we also plot the
level $np\epsilon$ (dashed line) and $np\epsilon\|A\|$ (dotted line),
which represent the approximate sizes of the terms $\mathcal{O}(\epsilon)$
and $\mathcal{O}(\epsilon)\|A\|$. We obtained similar results also
for $p=3,4,5$.

Although the perturbations $\Delta v_{j}$ are small (comparable to
$\epsilon$), they can significantly impact the behavior of the block
Lanczos algorithm in finite precision arithmetic. The orthogonality
among the block Lanczos vectors can be completely lost after a few
iterations. This means that we can no longer consider $V_{k}$ to
be a matrix with (almost) orthonormal columns. 

We will now derive an analogous bound to (\ref{eq:deltaBound}) which
holds for numerically computed quantities. Let $\lambda_{i}$ be the
eigenvalues of $A$ and assume that (\ref{def:spectral_decomp-T})
is the (exact) spectral decomposition of the computed Jacobi matrix
$T_{k}$. Multiplying (\ref{eq:perturbedBLanczos}) by $S_{k}$ and
taking the $i$th column of the resulting block vectors yields
\[
Az_{i}^{(k)}=\theta_{i}^{(k)}z_{i}^{(k)}+v_{k+1}\beta_{k+1}\sigma_{p,i}^{(k)}+\Delta V_{k}s_{i}^{(k)}.
\]
Taking the norm on both sides and using (\ref{eq:FPperturbations})
and (\ref{eq:FPproperties-localOG}), we obtain 
\begin{equation}
\min_{j=1,...,k}|\lambda_{j}-\theta_{i}^{(k)}|\leq\frac{\|Az_{i}^{(k)}-\theta_{i}^{(k)}z_{i}^{(k)}\|}{\|z_{i}^{(k)}\|}\le\frac{\delta_{k,i}\left(1+\mathcal{O}(\epsilon)\right)+\mathcal{O}(\epsilon)\|A\|}{\|z_{i}^{(k)}\|}.\label{eq:deltaFPBound}
\end{equation}
As in the single-vector case, this bound is useful when $\|z_{i}^{(k)}\|$
is not small.

\section{Model of finite precision computations \protect\label{sec:FPmodel}}

In this section, we present a generalization of Greenbaum's construction,
see \cite{Gr1989}, of the model of single-vector Lanczos computations.

Suppose that we have performed $k$ iterations of the finite precision
block Lanczos algorithm applied to $A$ and an initial vector $v$,
using the perturbed recurrences (\ref{eq:perturbedBLanczos}), so
that $V_{k}$, $T_{k}$ and the perturbations $\Delta V_{k-1}=\left[\Delta v_{1},\ldots,\Delta v_{k-1}\right]$
are known. Next, assume that we can determine the perturbations $\Delta\widetilde{V}_{N-k+1}=\left[\Delta\widetilde{v}_{k},\ldots,\Delta\widetilde{v}_{N}\right]$
such that
\begin{equation}
AV_{N}=V_{N}T_{N}+\left[\Delta V_{k-1},\Delta\widetilde{V}_{N-k+1}\right],\label{eq:Grmodel}
\end{equation}
where 
\[
T_{N}=\left[\begin{array}{cccc}
T_{k} & \beta_{k+1}^{T}\\
\beta_{k+1} & \alpha_{k+1} & \ddots\\
 & \ddots & \ddots & \beta_{N}^{T}\\
 &  & \beta_{N} & \alpha_{N}
\end{array}\right]
\]
and $\beta_{N+1}=0$. A method for obtaining (\ref{eq:Grmodel})
will be discussed later in Section~\ref{subsec:The-continuation-process},
along with the sizes of $\Delta\widetilde{V}_{N-k+1}$
and $T_{N}$. First, we need to find a substitute for Paige's theorem;
see \cite[p.~22]{Gr1989}. The following theorem establishes a connection
between the size of the perturbations $[\Delta V_{k-1},\Delta\widetilde{V}_{N-k+1}]$
and the spread of the eigenvalues of $T_{N}$ around the eigenvalues
of~$A$. Since a comprehensive theory describing the finite precision
behavior of the block Lanczos algorithm is still lacking, the theorem
is stated under additional assumptions.
\begin{thm}
\label{thm:size_intervals} Let $T_{N}$ be the block tridiagonal
matrix generated by a perturbed block Lanczos recurrence (\ref{eq:Grmodel})
with perturbations $[\Delta V_{k-1},\Delta\widetilde{V}_{N-k+1}]$,
which satisfy
\[
\max\left(\|\Delta v_{1}\|,\ldots,\|\Delta v_{k-1}\|,\|\Delta\widetilde{v}_{k}\|,\ldots,\|\Delta\widetilde{v}_{N}\|\right)\leq\epsilon_{2}\|A\|
\]
for some $\epsilon_{2}>0$. Furthermore, let $\epsilon_{1}>0$ (independent
of the indices $i$ and $j$) be such that for every Ritz pair $(\theta_{i}^{(N)},z_{i}^{(N)})$
of $T_{N}$ with $\|z_{i}^{(N)}\|<0.5$ there exists a Ritz pair $(\theta_{j}^{(N)},z_{j}^{(N)})$
for which 
\begin{equation}
\|z_{j}^{(N)}\|\geq0.5\quad\text{and}\quad|\theta_{i}^{(N)}-\theta_{j}^{(N)}|\leq\epsilon_{1}\|A\|.\label{eq:assumtion}
\end{equation}
Then each eigenvalue of $T_{N}$ lies within 
\begin{equation}
3\max\left(\sqrt{N}\epsilon_{2},\epsilon_{1}\right)\|A\|\label{eq:size}
\end{equation}
 of an eigenvalue of $A$.
\end{thm}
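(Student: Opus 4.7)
The plan is to adapt Greenbaum's single-vector argument to the block setting by first converting the perturbed recurrence into a residual bound for each Ritz pair, then splitting into two cases according to the size of the Ritz vector. The starting observation is that, because $\beta_{N+1}=0$, the recurrence \refm{eq:Grmodel} has no boundary term at step $N+1$. Multiplying it on the right by the $i$th eigenvector $s_i^{(N)}$ of $T_N$ and using $z_i^{(N)}=V_N s_i^{(N)}$ yields the clean identity
\[
Az_i^{(N)}-\theta_i^{(N)}z_i^{(N)}=[\Delta V_{k-1},\Delta\widetilde V_{N-k+1}]\,s_i^{(N)},
\]
which expresses the Ritz residual with respect to $A$ purely in terms of the perturbations.

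The next step is to bound the right-hand side. I would partition $s_i^{(N)}$ into $N$ subvectors $s_{(1)},\ldots,s_{(N)}\in\mathbb{R}^p$ corresponding to the block structure of $[\Delta V_{k-1},\Delta\widetilde V_{N-k+1}]$, write the product as $\sum_{j=1}^N \Delta v_j s_{(j)}$ (using $\Delta\widetilde v_j$ where appropriate), apply $\|\Delta v_j\|\le \epsilon_2\|A\|$ to each term, and use Cauchy--Schwarz together with $\|s_i^{(N)}\|=1$. This produces the block residual estimate
\[
\|Az_i^{(N)}-\theta_i^{(N)}z_i^{(N)}\|\le \sqrt{N}\,\epsilon_2\|A\|,
\]
where the $\sqrt{N}$ factor is the price paid for passing from individual block perturbations to the combined action on a unit vector.

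From here the argument splits into two cases. If $\|z_i^{(N)}\|\ge 0.5$, the standard symmetric Rayleigh--Ritz bound (dividing the residual by $\|z_i^{(N)}\|$) places some eigenvalue of $A$ within $2\sqrt{N}\epsilon_2\|A\|$ of $\theta_i^{(N)}$. If $\|z_i^{(N)}\|<0.5$, the hypothesis \refm{eq:assumtion} supplies an index $j$ with $\|z_j^{(N)}\|\ge 0.5$ and $|\theta_i^{(N)}-\theta_j^{(N)}|\le \epsilon_1\|A\|$; applying the first case to $\theta_j^{(N)}$ and invoking the triangle inequality gives
\[
\min_\ell |\lambda_\ell-\theta_i^{(N)}|\le \bigl(\epsilon_1+2\sqrt{N}\epsilon_2\bigr)\|A\|\le 3\max\!\bigl(\sqrt{N}\epsilon_2,\epsilon_1\bigr)\|A\|,
\]
which is precisely \refm{eq:size}.

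I do not expect a deep obstacle: this is a purely perturbation-theoretic argument, and all the substantive work (the block analogue of Paige's control of small-norm Ritz vectors) has already been packaged into the hypothesis \refm{eq:assumtion}. The only delicate point is the Cauchy--Schwarz step, where one must be careful to use the spectral norm of each block $\Delta v_j$ against the Euclidean norm of the subvector $s_{(j)}$; without additional structural information about $s_i^{(N)}$, the resulting $\sqrt{N}$ factor in \refm{eq:size} appears unavoidable.
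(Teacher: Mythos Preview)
Your proposal is correct and follows essentially the same approach as the paper: derive the residual identity $Az_i^{(N)}-\theta_i^{(N)}z_i^{(N)}=[\Delta V_{k-1},\Delta\widetilde V_{N-k+1}]s_i^{(N)}$, bound the right-hand side by $\sqrt{N}\epsilon_2\|A\|$, and split into the two cases $\|z_i^{(N)}\|\ge 0.5$ and $\|z_i^{(N)}\|<0.5$ using the hypothesis~\refm{eq:assumtion} and the triangle inequality. Your explicit blockwise Cauchy--Schwarz justification of the $\sqrt{N}$ factor is a welcome addition, but otherwise the arguments coincide.
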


\begin{proof}
Let $T_{N}=S_{N}\Theta_{N}S_{N}^{T}$ be the spectral decomposition
defined as (\ref{def:spectral_decomp-T}). Multiplying (\ref{eq:Grmodel})
by $S_{N}$ from the right and taking the $i$th column of the resulting
matrix equation yields
\begin{equation}
Az_{i}^{(N)}-\theta_{i}^{(N)}z_{i}^{(N)}=\left[\Delta V_{k-1},\Delta\widetilde{V}_{N-k+1}\right]s_{i}^{(N)}.\label{eq:FPblock-RitzPairs}
\end{equation}
Using (\ref{eq:FPblock-RitzPairs}) and performing some simple algebraic
manipulations, we obtain the following bound:
\begin{align}
\min_{l}|\lambda_{l}-\theta_{i}^{(N)}|\|z_{i}^{(N)}\| & \leq\|Az_{i}^{(N)}-\theta_{i}^{(N)}z_{i}^{(N)}\|\nonumber \\
 & =\left\Vert \left[\Delta V_{k-1},\Delta\widetilde{V}_{N-k+1}\right]s_{i}^{(N)}\right\Vert \leq\sqrt{N}\epsilon_{2}\|A\|.\label{eq:proof_bound}
\end{align}
If $\|z_{i}^{(N)}\|\geq0.5$, then from (\ref{eq:proof_bound}), we
obtain
\[
\min_{l}|\lambda_{l}-\theta_{i}^{(N)}|\leq2\sqrt{N}\epsilon_{2}\|A\|.
\]
On the other hand, if $\|z_{i}^{(N)}\|<0.5$, then the assumptions
of Theorem~\ref{thm:size_intervals} ensure that there is a Ritz
value $\theta_{j}^{(N)}$ to within $\epsilon_{1}\|A\|$ of $\theta_{i}^{(N)}$
for which $\|z_{j}^{(N)}\|\geq0.5$. Based on the previous reasoning,
it holds that
\[
\min_{l}|\lambda_{l}-\theta_{i}^{(N)}|\leq\min_{l}|\lambda_{l}-\theta_{j}^{(N)}|+|\theta_{j}^{(N)}-\theta_{i}^{(N)}|\leq\left(2\sqrt{N}\epsilon_{2}+\epsilon_{1}\right)\|A\|,
\]
which implies the bound (\ref{eq:size}).
\end{proof}
In summary, if one can find perturbations $\Delta\widetilde{V}_{N-k+1}$
with small norm, say $\mathcal{O}(\sqrt{\epsilon})\|A\|$, such that
(\ref{eq:Grmodel}) holds and if the assumption from Theorems~\ref{thm:size_intervals}
regarding $\epsilon_{1}$ is satisfied for some $\epsilon_{1}=\mathcal{O}(\sqrt{\epsilon})$,
then the extended matrix $T_{N}$ must have eigenvalues in intervals
of width $\mathcal{O}(\sqrt{\epsilon})\|A\|$ around the eigenvalues
of $A$. Although the existence of a sufficiently small $\epsilon_{1}$
is difficult to establish theoretically, our experiments in Section~\ref{sec:Experiments}
indicate that $\epsilon_{1}$ is consistently small enough and has
negligible impact on the bound (\ref{eq:size}).

\subsection{The continuation process \protect\label{subsec:The-continuation-process}}

In this section, we present a construction that leads to (\ref{eq:Grmodel})
after $k$ iterations of the finite precision block Lanczos algorithm.
Specifically, we construct the perturbations $\Delta\widetilde{V}_{N-k+1}$
in a particular way, using information obtained from the finite precision
iterations. This construction is analogous to that introduced in \cite{Gr1989}
for the single-vector Lanczos algorithm, and will be referred to as
the \textit{continuation process}.

Let $W_{k}$ be an $n\times m$ matrix such that $W_{k}^{T}W_{k}=I_{m}$.
Suppose that the block vectors $v_{k-1}$, $v_{k}$, and the block
coefficients $\alpha_{k}$ and $\beta_{k}$ have already been computed,
for instance, after $k$ iterations of the finite precision block
Lanczos algorithm applied to $A$ and $v$. We now describe a procedure
for generating block vectors $q_{k+j}$, for $j=1,2,\dots,$ by orthogonalizing
$Aq_{k+j-1}$ against $W_{k}$, $q_{k+1}$, and the most recently
computed vector $q_{k+j-1}$. Consider the following construction
\begin{equation}
\begin{aligned}\tilde{q}_{k+1} & =Av_{k}-v_{k}\alpha_{k}-v_{k-1}\beta_{k}^{T}, & q_{k+1}\beta_{k+1} & =(I_{n}-P_{1})\tilde{q}_{k+1},\\
\tilde{q}_{k+2} & =Aq_{k+1}-v_{k}\beta_{k+1}^{T}, & q_{k+2}\beta_{k+2} & =(I_{n}-P_{2})\tilde{q}_{k+2},\\
\tilde{q}_{k+j} & =Aq_{k+j-1}-q_{k+j-2}\beta_{k+j-1}^{T}, & q_{k+j}\beta_{k+j} & =(I_{n}-P_{j})\tilde{q}_{k+j},\:j\geq3,
\end{aligned}
\label{eq:CP_I}
\end{equation}
where $P_{i}$ are orthogonal projectors
\begin{align*}
P_{1} & =W_{k}W_{k}^{T},\\
P_{2} & =P_{1}+q_{k+1}q_{k+1}^{T},\\
P_{j} & =P_{2}+q_{k+j-1}q_{k+j-1}^{T}.
\end{align*}
The columns of block vectors $q_{k+j}$, for $j\geq1$, are defined
as orthonormal bases of the column spaces of $(I_{n}-P_{j})\tilde{q}_{k+j}$.
In cases of rank deficiency, the number of columns in $q_{k+j}$ is
reduced accordingly, and the corresponding block $\beta_{k+j}$ becomes
rectangular with full row rank. Implementation details for computing
$q_{k+j}$ and $\beta_{k+j}$ are provided in Section~\ref{subsec:Implement_CP}. 

We now prove that the block vectors produced by the process (\ref{eq:CP_I})
are orthonormal. 
\begin{thm}
\label{thm:orthogonality}The set of columns of $W_{k}$ and $q_{k+1},\ldots,q_{k+j}$,
given by (\ref{eq:CP_I}), is orthonormal. Moreover, for $j\geq3$,
it holds that
\[
\beta_{k+j}=q_{k+j}^{T}Aq_{k+j-1}.
\]
\end{thm}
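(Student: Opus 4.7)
The plan is to prove both statements together by strong induction on $j$, with induction hypothesis at step $j$ consisting of (a) orthonormality of the columns of $W_k, q_{k+1}, \ldots, q_{k+j-1}$, together with (b) the identity $\beta_{k+j'} = q_{k+j'}^T A q_{k+j'-1}$ for every $3 \le j' \le j-1$. Orthonormality of the columns inside each individual block $q_{k+j}$ is built into the QR step of the construction, so the real content lies in cross-orthogonality between blocks, combined with the formula for $\beta_{k+j}$.

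The base cases $j = 1, 2, 3$ are essentially forced by the choice of $P_j$. For each of them, left-multiplying the defining relation $q_{k+j}\beta_{k+j} = (I - P_j)\tilde{q}_{k+j}$ by any of $W_k^T$, $q_{k+1}^T$, $q_{k+j-1}^T$ annihilates the right-hand side, and cancelling the full-row-rank factor $\beta_{k+j}$ delivers the desired orthogonalities. At $j = 3$, the formula $\beta_{k+3} = q_{k+3}^T A q_{k+2}$ follows by left-multiplying the same relation by $q_{k+3}^T$ and invoking the just-established $q_{k+3}^T P_3 = 0$ and $q_{k+3}^T q_{k+1} = 0$ inside $\tilde{q}_{k+3} = A q_{k+2} - q_{k+1}\beta_{k+2}^T$.

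For $j \ge 4$, the nontrivial content of the inductive step is $q_{k+i}^T q_{k+j} = 0$ for $2 \le i \le j - 2$ (orthogonality to $W_k, q_{k+1}, q_{k+j-1}$ is again built in). Left-multiplying by $q_{k+i}^T$ and using the induction hypothesis to annihilate $q_{k+i}^T P_j$, the remaining task reduces to verifying $q_{k+i}^T \tilde{q}_{k+j} = 0$, with $\tilde{q}_{k+j} = A q_{k+j-1} - q_{k+j-2}\beta_{k+j-1}^T$. I split into two subcases. For $i = j - 2$, the transposed hypothesis (b) at index $j - 1$ gives $q_{k+j-2}^T A q_{k+j-1} = \beta_{k+j-1}^T$ (using symmetry of $A$), which exactly cancels the second term. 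For $2 \le i \le j - 3$, the second term vanishes by (a), and the first term is rewritten by substituting $A q_{k+i} = q_{k+i+1}\beta_{k+i+1} + P_{i+1}\tilde{q}_{k+i+1} + q_{k+i-1}\beta_{k+i}^T$; under the constraint $i \le j - 3$, every summand on the right is orthogonal to $q_{k+j-1}$ by (a), because the indices $i + 1$ and $i - 1$ and the components $W_k, q_{k+1}, q_{k+i}$ inside $P_{i+1}$ are all distinct from $q_{k+j-1}$.

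Once orthogonality at index $j$ is in place, the formula $\beta_{k+j} = q_{k+j}^T A q_{k+j-1}$ follows from the defining relation by left-multiplication by $q_{k+j}^T$, using $q_{k+j}^T q_{k+j} = I$, $q_{k+j}^T P_j = 0$, and the just-proved $q_{k+j}^T q_{k+j-2} = 0$. The main obstacle is the middle range $2 \le i \le j - 3$ in the orthogonality step: the projector $P_j$ removes only three components, so orthogonality to the intermediate blocks is not automatic and must be propagated through one step of the three-term recurrence. The delicate index bookkeeping, ensuring that unfolding the recurrence at index $i$ produces only indices distinct from $j - 1$, is precisely what distinguishes this range from the direct case $i = j - 2$, where hypothesis (b) does the work instead.
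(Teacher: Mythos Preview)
Your proof is correct and follows essentially the same approach as the paper's: both argue by strong induction, use the projector $P_j$ to handle orthogonality against $W_k$, $q_{k+1}$, $q_{k+j-1}$ directly, treat the case $i=j-2$ via the induction hypothesis on $\beta_{k+j-1}$, and handle the middle range $2\le i\le j-3$ by unfolding the three-term recurrence at index $i$ (the paper leaves this as $q_{k+j-1}^T\tilde q_{k+i+1}$ while you expand one step further into $q_{k+i+1}\beta_{k+i+1}+P_{i+1}\tilde q_{k+i+1}$, which is the same thing).
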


\begin{proof}
The orthogonality of the columns of $[W_{k},q_{k+1},q_{k+2},q_{k+3}]$
follows directly from the definition of the process. It is also easy
to see that
\[
\beta_{k+3}=q_{k+3}^{T}(I_{n}-P_{3})\tilde{q}_{k+3}=q_{k+3}^{T}Aq_{k+2}.
\]

We will prove the theorem by induction. Let the columns of $[W_{k},q_{k+1},\ldots,q_{k+j-1}]$
be orthonormal and let $\beta_{k+i}=q_{k+i}^{T}Aq_{k+i-1}$ hold for
$3\leq i\leq j-1$ for a given $j>3$. By the induction hypothesis
and the definition of $q_{k+j}$, the block vector $q_{k+j}$ is orthogonal
to $W_{k},q_{k+1}$ and $q_{k+j-1}$. For $1<i<j-1$ we obtain
\begin{align}
\textbf{\ensuremath{q_{k+i}^{T}}}q_{k+j}\beta_{k+j} & =q_{k+i}^{T}(I_{n}-P_{j})\tilde{q}_{k+j}=q_{k+i}^{T}\tilde{q}_{k+j}\nonumber \\
 & =q_{k+i}^{T}(Aq_{k+j-1}-q_{k+j-2}\beta_{k+j-1}^{T}).\label{eq:CP_pom}
\end{align}
For $i=j-2$, the right-hand side of (\ref{eq:CP_pom}) is zero by
the induction hypothesis applied to $\beta_{k+j-1}$. For $1<i<j-2$,
the right-hand side of (\ref{eq:CP_pom}) simplifies as 
\[
q_{k+i}^{T}Aq_{k+j-1}=\left(q_{k+j-1}^{T}(\tilde{q}_{k+i+1}+q_{k+i-1}\beta_{k+i}^{T})\right)^{T}=0.
\]
Since (\ref{eq:CP_pom}) has just been shown to be zero and $\beta_{k+j}$
has full row rank in all cases, it holds that $\textbf{\ensuremath{q_{k+i}^{T}}}q_{k+j}=0$.
This completes the induction step for orthogonality. Finally, we obtain
\[
\beta_{k+j}=q_{k+j}^{T}q_{k+j}\beta_{k+j}=q_{k+j}^{T}(I_{n}-P_{j})\tilde{q}_{k+j}=q_{k+j}^{T}\tilde{q}_{k+j}=q_{k+j}^{T}Aq_{k+j-1}.
\]
\end{proof}
The following theorem shows that the process defined in (\ref{eq:CP_I})
can also be interpreted as a perturbed three-term recurrence.
\begin{thm}
\label{thm:hj}The process defined in (\ref{eq:CP_I}) can be written
in following way
\begin{equation}
\begin{aligned}q_{k+1}\beta_{k+1} & =Av_{k}-v_{k}\alpha_{k}-v_{k-1}\beta_{k}^{T}-h_{k},\\
q_{k+2}\beta_{k+2} & =Aq_{k+1}-q_{k+1}\alpha_{k+1}-v_{k}\beta_{k+1}^{T}-h_{k+1},\\
q_{k+j}\beta_{k+j} & =Aq_{k+j-1}-q_{k+j-1}\alpha_{k+j-1}-q_{k+j-2}\beta_{k+j-1}^{T}-h_{k+j-1},\:j\geq3,
\end{aligned}
\label{eq:CP_II}
\end{equation}
where \textup{
\begin{equation}
\begin{aligned}\alpha_{k+1} & =q_{k+1}^{T}(Aq_{k+1}-v_{k}\beta_{k+1}^{T}),\\
\alpha_{k+j-1} & =q_{k+j-1}^{T}Aq_{k+j-1},
\end{aligned}
\label{eq:alphas}
\end{equation}
} and\textup{
\begin{align*}
h_{k} & =P_{1}\left(Av_{k}-v_{k}\alpha_{k}-v_{k-1}\beta_{k}^{T}\right),\\
h_{k+1} & =P_{1}\left(Aq_{k+1}-v_{k}\beta_{k+1}^{T}\right),\\
h_{k+j-1} & =P_{1}Aq_{k+j-1}+q_{k+1}\beta_{k+1}v_{k}^{T}q_{k+j-1},
\end{align*}
}with $P_{1}=W_{k}W_{k}^{T}$.
\end{thm}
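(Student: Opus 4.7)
The statement is an algebraic reformulation of (\ref{eq:CP_I}): we split each orthogonal projection $P_j$ into the ``three-term-recurrence part'' (which produces $\alpha_{k+j-1}$) and a remainder $h$ that absorbs the extra projections onto $W_k$ and $q_{k+1}$. The plan is to verify the three lines of (\ref{eq:CP_II}) case by case, relying only on the explicit form of the $P_j$'s and the orthogonality established in Theorem~\ref{thm:orthogonality}.

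For $j=1$, substituting $P_1 = W_k W_k^T$ into the first line of (\ref{eq:CP_I}) immediately yields the first line of (\ref{eq:CP_II}) with $h_k = P_1\tilde q_{k+1}$. For $j=2$, I decompose $P_2 = P_1 + q_{k+1}q_{k+1}^T$. The $P_1$ part gives $h_{k+1}$; the $q_{k+1}q_{k+1}^T$ part produces $q_{k+1}\alpha_{k+1}$, since $q_{k+1}^T\tilde q_{k+2} = q_{k+1}^T(Aq_{k+1}-v_k\beta_{k+1}^T) = \alpha_{k+1}$ by the definition in (\ref{eq:alphas}). This establishes the second line of (\ref{eq:CP_II}).

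For $j\geq 3$, I decompose $P_j = P_1 + q_{k+1}q_{k+1}^T + q_{k+j-1}q_{k+j-1}^T$ and use Theorem~\ref{thm:orthogonality}. Orthogonality of $q_{k+j-1}$ and $q_{k+j-2}$ gives $q_{k+j-1}^T\tilde q_{k+j} = q_{k+j-1}^T Aq_{k+j-1} = \alpha_{k+j-1}$, yielding the $q_{k+j-1}\alpha_{k+j-1}$ term. Orthogonality of $q_{k+j-2}$ to the columns of $W_k$ reduces the $P_1$-contribution to $P_1 A q_{k+j-1}$, the first piece of $h_{k+j-1}$. What remains is to prove
\[
q_{k+1}^T\tilde q_{k+j} \;=\; \beta_{k+1}\,v_k^T q_{k+j-1}.
\]

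The hard part is exactly this last identity, which forces the use of both the symmetry of $A$ and the already-proved $j=2$ case. For $j\geq 4$, orthogonality gives $q_{k+1}^T q_{k+j-2}=0$, so it suffices to show $q_{k+1}^T A q_{k+j-1} = \beta_{k+1} v_k^T q_{k+j-1}$. Writing $Aq_{k+1} = q_{k+2}\beta_{k+2} + v_k\beta_{k+1}^T + q_{k+1}\alpha_{k+1} + h_{k+1}$ from the $j=2$ line of (\ref{eq:CP_II}), left-multiplying by $q_{k+j-1}^T$, and noting that $q_{k+j-1}$ is orthogonal to $q_{k+2}$, to $q_{k+1}$, and to the columns of $W_k$ (which contain $h_{k+1}$), all terms but $q_{k+j-1}^T v_k\beta_{k+1}^T$ drop out; transposing gives the claim. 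The sub-case $j=3$ is the main obstacle, since then $q_{k+j-2}=q_{k+1}$ and the naive term $q_{k+1}^T q_{k+j-2}\beta_{k+j-1}^T$ no longer vanishes. To handle it, I compute $\beta_{k+2} = q_{k+2}^T A q_{k+1} - q_{k+2}^T v_k \beta_{k+1}^T$ from the $j=2$ recurrence (using $q_{k+2}^T P_2 = 0$); substituting its transpose for $\beta_{k+2}^T$ in $q_{k+1}^T\tilde q_{k+3} = q_{k+1}^T Aq_{k+2}-\beta_{k+2}^T$ produces the exact cancellation that leaves $\beta_{k+1} v_k^T q_{k+2}$, as required.
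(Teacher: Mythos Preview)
Your proof is correct and follows essentially the same approach as the paper: both split $P_j$ into its constituent projectors, use Theorem~\ref{thm:orthogonality} to identify the $\alpha_{k+j-1}$ and $P_1Aq_{k+j-1}$ pieces, and then establish $q_{k+1}^T\tilde q_{k+j}=\beta_{k+1}v_k^Tq_{k+j-1}$ by expanding $Aq_{k+1}$ and handling $j=3$ and $j>3$ separately. The only cosmetic difference is that the paper expands $Aq_{k+1}$ via the original relation $\tilde q_{k+2}=Aq_{k+1}-v_k\beta_{k+1}^T$ from (\ref{eq:CP_I}), whereas you use the already-derived second line of (\ref{eq:CP_II}); these are the same identity.
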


\begin{proof}
The claim can be readily verified for $q_{k+1}$ and $q_{k+2}$. This
follows by substituting the definitions of the projectors $P_{1}$
and $P_{2}$, along with the coefficient $\alpha_{k+1}$ as given
in (\ref{eq:alphas}), into the process described in (\ref{eq:CP_I}).

We now focus on the case $j\geq3$. First, it holds that
\begin{align*}
q_{k+1}^{T}Aq_{k+j-1} & =(\tilde{q}_{k+2}+v_{k}\beta_{k+1}^{T})^{T}q_{k+j-1}\\
 & =(q_{k+2}\beta_{k+2}+P_{2}\tilde{q}_{k+2}+v_{k}\beta_{k+1}^{T})^{T}q_{k+j-1},
\end{align*}
and therefore
\begin{equation}
q_{k+1}^{T}Aq_{k+j-1}=\begin{cases}
\beta_{k+1}v_{k}^{T}q_{k+j-1}+\beta_{k+2}^{T}, & j=3,\\
\beta_{k+1}v_{k}^{T}q_{k+j-1}, & j>3.
\end{cases}\label{eq:showing_equality}
\end{equation}
Using (\ref{eq:showing_equality}) it can be shown that 
\begin{equation}
q_{k+1}(q_{k+1}^{T}Aq_{k+j-1}-q_{k+1}^{T}q_{k+j-2}\beta_{k+j-1}^{T})=q_{k+1}\beta_{k+1}v_{k}^{T}q_{k+j-1}.\label{eq:showing_equality2}
\end{equation}
Finally, by defining $\alpha_{k+j-1}$ as in (\ref{eq:alphas}) and
using (\ref{eq:showing_equality2}) together with Theorem~\ref{thm:orthogonality}
we can complete the proof for $j\geq3$.
\end{proof}
To summarize this section, we have introduced the continuation process,
which completes the $k$ iterations of the finite precision block
Lanczos algoritm using pertubed three-term recurrences. This process
involves only one free parameter: the matrix $W_{k}$. Since $W_{k}$
appears in the expressions for $h_{k+j}$, it directly influences
the size of the perturbations. The construction of a suitable $W_{k}$
is the focus of the next section.

%\subsection{Properties of $W_{k}$}
\subsection{Properties of \texorpdfstring{$W_{k}$}{Wk}}
\label{subsec:Construction-of-W}

In the previous section, we defined the continuation process, which
extends the finite precision block Lanczos computations beyond $k$
iterations and ultimately leads to (\ref{eq:Grmodel}). To apply Theorem~\ref{thm:size_intervals}
and bound the spread of the eigenvalues of $T_{N}$ around those of
$A$, the perturbations introduced by the continuation process must
be sufficiently small. Crucially, the only parameter available to
control the size of these perturbations is the matrix $W_{k}$. As
such, choosing an appropriate $W_{k}$ is essential for ensuring the
effectiveness of the continuation process, and poses a key challenge
in the overall construction. This issue is the focus of the current
section.

As in the single-vector case, $W_{k}$ is obtained from the QR factorization
of a selected subset of $m$ Ritz vectors, denoted $Z_{m}^{(k)}$.
While no general theoretical criterion exists for selecting $Z_{m}^{(k)}$,
we can derive certain requirements on $W_{k}$ by analyzing the resulting
perturbations. In the following theorem, we present an alternative
formulation of $h_{k+j}$, $j=0,1,\ldots$, which highlights the terms
that most significantly influence the size of the perturbations. 
\begin{thm}
\label{thm:matrix_W} Let $T_{k},V_{k},v_{k+1},\beta_{k+1}^{FP}$
be the quantities obtained after $k$ iterations of the finite precision
block Lanczos algorithm applied to $A$ with an initial vector $v$,
satisfying \textup{(\ref{eq:perturbedBLanczos})}, \textup{(\ref{eq:FPperturbations}),
(\ref{eq:FPproperties-localOG})} and \textup{(\ref{eq:FPbeta})}.
Let \textup{(\ref{def:spectral_decomp-T}) }denote the spectral decomposition
of\textup{ $T_{k}$} and define $r_{k}^{T}=[v_{k}^{T}V_{k-1},0_{p}]$.
Let $Z_{m}^{(k)}$ be a selected subset of $m$ linearly independent
Ritz vectors, with QR factorization $Z_{m}^{(k)}=W_{k}R_{k}$. Then,
considering the continuation process defined by\textup{ (\ref{eq:CP_II}),}
it holds that
\begin{equation}
\begin{aligned}h_{k} & =W_{k}W_{k}^{T}v_{k+1}\beta_{k+1}^{FP}+\Delta_{0}^{(k)},\\
h_{k+1} & =-W_{k}R_{k}^{-T}S_{m}^{(k)T}r_{k}\beta_{k+1}^{T}+\Delta_{1}^{(k)},\\
h_{k+j} & =q_{k+1}\beta_{k+1}v_{k}^{T}q_{k+j}+\Delta_{j}^{(k)},\quad j\geq2,
\end{aligned}
\label{eq:norms_perturbations}
\end{equation}
 where
\[
\begin{aligned}\|\Delta_{0}^{(k)}\| & \leq\mathcal{O}(\epsilon)\|A\|,\\
\|\Delta_{j}^{(k)}\| & \leq(1+\rho_{k})\|h_{k}\|+\left(1+\rho_{k}\right)\mathcal{O}(\epsilon)\|A\|,\quad j\geq1,
\end{aligned}
\]
and\textup{ $\rho_{k}=\|R_{k}^{-1}\|$.}
\end{thm}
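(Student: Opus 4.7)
The plan is to expand the projector $P_1=W_kW_k^T$ through the factorization $Z_m^{(k)}=V_kS_m^{(k)}=W_kR_k$, which gives $W_k^T=R_k^{-T}S_m^{(k)T}V_k^T$, and to combine this with the perturbed recurrence (\ref{eq:perturbedBLanczos}) and the orthogonality relations from Theorem~\ref{thm:orthogonality}. A crucial auxiliary identity, obtained by equating the definition $q_{k+1}\beta_{k+1}=(I-P_1)\tilde q_{k+1}$ with the last block column of (\ref{eq:perturbedBLanczos}), is
\[
v_{k+1}\beta_{k+1}^{FP}=q_{k+1}\beta_{k+1}+h_k-\Delta v_k,
\]
which I use to trade inner products of the form $v_{k+1}^Tq_{k+\ell}$ for quantities involving $h_k$ and $\Delta v_k$; this is the origin of the $\|h_k\|$ term on the right-hand side of the claimed bound.

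The formula for $h_k$ is immediate: since $\tilde q_{k+1}=v_{k+1}\beta_{k+1}^{FP}+\Delta v_k$, the definition of $h_k$ in Theorem~\ref{thm:hj} gives $h_k=W_kW_k^Tv_{k+1}\beta_{k+1}^{FP}+W_kW_k^T\Delta v_k$, so $\Delta_0^{(k)}=W_kW_k^T\Delta v_k$ and the bound follows from $\|W_k\|\le 1$ and (\ref{eq:FPperturbations}). For $h_{k+j}$ with $j\ge 2$, I would apply the transpose of (\ref{eq:perturbedBLanczos}), namely $V_k^TA=T_kV_k^T+e_k(\beta_{k+1}^{FP})^Tv_{k+1}^T+\Delta V_k^T$, to $q_{k+j}$ and invoke $T_kS_m^{(k)}=S_m^{(k)}\Theta_m^{(k)}$ together with $Z_m^{(k)T}q_{k+j}=R_k^TW_k^Tq_{k+j}=0$ to eliminate the $T_kV_k^Tq_{k+j}$ contribution, obtaining
\[
P_1Aq_{k+j}=W_kR_k^{-T}S_m^{(k)T}\bigl(e_k(\beta_{k+1}^{FP})^Tv_{k+1}^Tq_{k+j}+\Delta V_k^Tq_{k+j}\bigr).
\]
The key identity combined with $q_{k+1}^Tq_{k+j}=0$ then replaces $(\beta_{k+1}^{FP})^Tv_{k+1}^Tq_{k+j}$ by $h_k^Tq_{k+j}-\Delta v_k^Tq_{k+j}$, so that setting $\Delta_j^{(k)}=P_1Aq_{k+j}$ and using $\|W_kR_k^{-T}S_m^{(k)T}\|\le\rho_k$ with (\ref{eq:FPperturbations}) yields the required estimate.

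The case $j=1$ is the most delicate because $h_{k+1}$ also contains the term $-P_1v_k\beta_{k+1}^T$. I would expand $P_1v_k=W_kR_k^{-T}S_m^{(k)T}V_k^Tv_k$ via the decomposition $V_k^Tv_k=e_k+r_k+\xi_k$, where $\xi_k$ vanishes outside its last block, whose value $v_k^Tv_k-I_p$ has norm $\mathcal{O}(\epsilon)$ by (\ref{eq:FPproperties-normbLanczvecs}). Applying the key identity at $\ell=1$ with $q_{k+1}^Tq_{k+1}=I_p$ gives $(\beta_{k+1}^{FP})^Tv_{k+1}^Tq_{k+1}=\beta_{k+1}^T+h_k^Tq_{k+1}-\Delta v_k^Tq_{k+1}$, after which the two $W_kR_k^{-T}S_m^{(k)T}e_k\beta_{k+1}^T$ contributions (one from $P_1Aq_{k+1}$ and one from $P_1v_k\beta_{k+1}^T$) exactly cancel. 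The surviving dominant term is $-W_kR_k^{-T}S_m^{(k)T}r_k\beta_{k+1}^T$, while $\Delta_1^{(k)}$ absorbs the $\xi_k\beta_{k+1}^T$, $e_kh_k^Tq_{k+1}$, $e_k\Delta v_k^Tq_{k+1}$, and $\Delta V_k^Tq_{k+1}$ remainders. The main obstacle is this bookkeeping: one must verify the $\beta_{k+1}^T$ cancellation exactly, and also check that the continuation coefficient satisfies $\|\beta_{k+1}\|\le\mathcal{O}(\|A\|)$, which I would deduce from $\|\beta_{k+1}\|\le\|\tilde q_{k+1}\|\le\|v_{k+1}\|\|\beta_{k+1}^{FP}\|+\|\Delta v_k\|$ together with (\ref{eq:FPbeta}), so that the $\xi_k\beta_{k+1}^T$ term indeed remains at size $\mathcal{O}(\epsilon)\|A\|$.
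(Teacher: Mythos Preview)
Your proof is correct and follows essentially the same route as the paper: both rely on the identity $v_{k+1}\beta_{k+1}^{FP}=q_{k+1}\beta_{k+1}+h_k-\Delta v_k$, the factorization $W_k^T=R_k^{-T}S_m^{(k)T}V_k^T$, the orthogonality $W_k^Tq_{k+j}=0$ (which kills the $T_k$ contribution), and the splitting $V_k^Tv_k=e_k+r_k+\xi_k$ that produces the exact cancellation of the $e_k\beta_{k+1}^T$ terms in the $j=1$ case. The only difference is organizational: the paper first derives a relation for $AW_k$ and then pairs its transpose with $q_{k+j}$, whereas you apply the transposed perturbed recurrence $V_k^TA$ to $q_{k+j}$ directly---the resulting remainder terms $\Delta_j^{(k)}$ are literally the same.
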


\begin{proof}
The first perturbation term $h_{k}$ in (\ref{eq:CP_II}) can be written
as
\[
h_{k}=W_{k}W_{k}^{T}\left(Av_{k}-v_{k}\alpha_{k}-v_{k-1}\beta_{k}^{T}\right)=W_{k}W_{k}^{T}v_{k+1}\beta_{k+1}^{FP}+\Delta_{0}^{(k)},
\]
where $\Delta_{0}^{(k)}=W_{k}W_{k}^{T}\Delta v_{k}$. Using (\ref{eq:FPperturbations}),
we obtain the bound $\|\Delta_{0}^{(k)}\|\leq\mathcal{O}(\epsilon)\|A\|$. 

Let us now express and bound the perturbation terms $h_{k+j}$ for
$j\geq1$. We begin by examining the expressions $W_{k}W_{k}^{T}Aq_{k+j}$. 

Multiplying (\ref{eq:perturbedBLanczos}) from the right by $S_{m}^{(k)}R_{k}^{-1}$,
we arrive at
\begin{equation}
AW_{k}=W_{k}R_{k}\Theta_{m}^{(k)}R_{k}^{-1}+v_{k+1}\beta_{k+1}^{FP}e_{k}^{T}S_{m}^{(k)}R_{k}^{-1}+E_{1}^{(k)},\label{eq:AW}
\end{equation}
where 
\[
E_{1}^{(k)}=\Delta V_{k}S_{m}^{(k)}R_{k}^{-1},
\]
and $\Theta_{m}^{(k)}$ is the diagonal matrix of Ritz values corresponding
to the selected Ritz vectors stored in $Z_{m}^{(k)}$. Using (\ref{eq:FPperturbations}),
the size of $E_{1}^{(k)}$ can be bounded by 
\begin{equation}
\|E_{1}^{(k)}\|\leq\rho_{k}\,\mathcal{O}(\epsilon)\|A\|,\label{eq:estimE1}
\end{equation}
where $\rho_{k}=\|R_{k}^{-1}\|$. 

Now, let us focus on expressing the term $e_{k}^{T}S_{m}^{(k)}R_{k}^{-1}$,
which appears in the middle term on the right-hand side of (\ref{eq:AW}).
First realize that
\begin{eqnarray*}
v_{k}^{T}W_{k} & = & v_{k}^{T}Z_{m}^{(k)}R_{k}^{-1}\\
 & = & v_{k}^{T}\left[V_{k-1},v_{k}\right]S_{m}^{(k)}R_{k}^{-1}\\
 & = & r_{k}^{T}S_{m}^{(k)}R_{k}^{-1}+v_{k}^{T}v_{k}e_{k}^{T}S_{m}^{(k)}R_{k}^{-1}.
\end{eqnarray*}
Writing $v_{k}^{T}v_{k}=I_{p}+F_{k}$, we obtain 
\begin{equation}
e_{k}^{T}S_{m}^{(k)}R_{k}^{-1}=v_{k}^{T}W_{k}-r_{k}^{T}S_{m}^{(k)}R_{k}^{-1}-F_{k}e_{k}^{T}S_{m}^{(k)}R_{k}^{-1}.\label{eq:help12}
\end{equation}
Note that using (\ref{eq:FPproperties-normbLanczvecs}), we have 
\begin{equation}
\|F_{k}\|\leq\mathcal{O}(\epsilon),\label{eq:help2}
\end{equation}

Using (\ref{eq:help12}), the relation (\ref{eq:AW}) can be written
in the form
\begin{eqnarray}
AW_{k} & = & W_{k}R_{k}\Theta_{m}^{(k)}R_{k}^{-1}\label{eq:AW_approx}\\
 &  & +\,v_{k+1}\beta_{k+1}^{FP}\left(v_{k}^{T}W_{k}-r_{k}^{T}S_{m}^{(k)}R_{k}^{-1}\right)+\widetilde{F}_{k}+E_{1}^{(k)},\nonumber 
\end{eqnarray}
where $\widetilde{F}_{k}=-v_{k+1}\beta_{k+1}^{FP}F_{k}e_{k}^{T}S_{m}^{(k)}R_{k}^{-1}$.
Using (\ref{eq:FPproperties-normbLanczvecs}), (\ref{eq:FPbeta})
and (\ref{eq:help2}) we obtain 
\begin{equation}
\|\widetilde{F}_{k}\|\leq\rho_{k}\,\mathcal{O}(\epsilon)\|A\|.\label{eq:help3}
\end{equation}

Further, denoting $E_{j+1}^{(k)}=q_{k+j}^{T}(h_{k}-\Delta v_{k})$
for $j\geq1$, we get
\begin{equation}
q_{k+j}^{T}v_{k+1}\beta_{k+1}^{FP}=q_{k+j}^{T}\left(q_{k+1}\beta_{k+1}+h_{k}-\Delta v_{k}\right)=\begin{cases}
\beta_{k+1}+E_{2}^{(k)}, & j=1,\\
E_{j+1}^{(k)}, & j>1,
\end{cases}\label{eq:definingW}
\end{equation}
and the size of $E_{j+1}^{(k)}$ can be bounded, using (\ref{eq:FPperturbations}),
as follows
\begin{equation}
\|E_{j+1}^{(k)}\|\leq\|h_{k}\|+\mathcal{O}(\epsilon)\|A\|,\quad j\geq1.\label{eq:estimEj+1}
\end{equation}

Combining the algebraic expressions (\ref{eq:AW_approx}) and (\ref{eq:definingW}),
we obtain 
\begin{eqnarray}
W_{k}\left(W_{k}^{T}Aq_{k+1}\right) & = & W_{k}\left(q_{k+1}^{T}AW_{k}\right)^{T}\nonumber \\
 & = & W_{k}\left(v_{k}^{T}W_{k}-r_{k}^{T}S_{m}^{(k)}R_{k}^{-1}\right)^{T}\left(q_{k+1}^{T}v_{k+1}\beta_{k+1}^{FP}\right)^{T}\nonumber \\
 &  & +\,W_{k}(\widetilde{F}_{k}+E_{1}^{(k)})^{T}q_{k+1}\nonumber \\
 & = & W_{k}\left(v_{k}^{T}W_{k}-r_{k}^{T}S_{m}^{(k)}R_{k}^{-1}\right)^{T}\beta_{k+1}^{T}+\Delta_{1}^{(k)},\label{eq:help3-2}
\end{eqnarray}
where 
\[
\Delta_{1}^{(k)}=W_{k}\left(v_{k}^{T}W_{k}-r_{k}^{T}S_{m}^{(k)}R_{k}^{-1}\right)^{T}E_{2}^{(k)T}+W_{k}(\widetilde{F}_{k}+E_{1}^{(k)})^{T}q_{k+1}.
\]

To complete the proof, we now express the perturbation terms $h_{k+j}$
using the previous results and Theorem~\ref{thm:hj}. For $j=1$,
we obtain, using (\ref{eq:CP_II}) and (\ref{eq:help3-2}),
\[
h_{k+1}=W_{k}W_{k}^{T}\left(Aq_{k+1}-v_{k}\beta_{k+1}^{T}\right)=-W_{k}R_{k}^{-T}\left(S_{m}^{(k)}\right)^{T}r_{k}\beta_{k+1}^{T}+\Delta_{1}^{(k)},
\]
and for $j>1$, we get, using (\ref{eq:CP_II}), 
\begin{align*}
h_{k+j} & =W_{k}W_{k}^{T}\left(Aq_{k+j}-q_{k+j-1}\beta_{k+j}^{T}\right)+q_{k+1}\beta_{k+1}v_{k}^{T}q_{k+j}\\
 & =q_{k+1}\beta_{k+1}v_{k}^{T}q_{k+j}+\Delta_{j}^{(k)},
\end{align*}
where we denoted 
\[
\Delta_{j}^{(k)}=W_{k}\left(W_{k}^{T}Aq_{k+j}\right).
\]
The term $\Delta_{j}^{(k)}$ can be expressed, using (\ref{eq:AW_approx})
and (\ref{eq:definingW}), in the form
\begin{eqnarray*}
\Delta_{j}^{(k)} & = & W_{k}\left(v_{k+1}\beta_{k+1}^{FP}\left(v_{k}^{T}W_{k}-r_{k}^{T}S_{m}^{(k)}R_{k}^{-1}\right)+\widetilde{F}_{k}+E_{1}^{(k)}\right)^{T}q_{k+j}\\
 & = & W_{k}\left(v_{k}^{T}W_{k}-r_{k}^{T}S_{m}^{(k)}R_{k}^{-1}\right)^{T}\left(E_{j+1}^{(k)}\right)^{T}+W_{k}(\widetilde{F}_{k}+E_{1}^{(k)})^{T}q_{k+j}.
\end{eqnarray*}

Finally, from (\ref{eq:estimE1}), (\ref{eq:estimEj+1}) and (\ref{eq:help3})
it follows that
\[
\|\Delta_{j}^{(k)}\|\leq(1+\rho_{k})\|h_{k}\|+\left(1+\rho_{k}\right)\mathcal{O}(\epsilon)\|A\|,\quad j\geq1,
\]
which finished the proof.
\end{proof}
If the columns of $Z_{m}^{(k)}$ are sufficiently linearly independent,
then $\rho_{k}\leq\mathcal{O}(1)$. In this case, Theorem~\ref{thm:matrix_W}
implies that the size of $h_{k}$ and $h_{k+1}$ depends primarily
on the terms 
\begin{equation}
\begin{gathered}\|W_{k}^{T}v_{k+1}\beta_{k+1}^{FP}\|\quad\mbox{and\ensuremath{\quad}\ensuremath{\|\beta_{k+1}r_{k}^{T}S_{m}^{(k)}R_{k}^{-1}\|}.}\end{gathered}
\label{eq:perturbation_norms1}
\end{equation}
To keep the size of the remaining perturbations $h_{k+j}$ for
$j\geq2$ small, we can present a sufficient condition based on Theorem~\ref{thm:matrix_W}.
Since the matrix $W_{k}$ has orthonormal columns, 
it holds that
\begin{eqnarray}
\left\Vert q_{k+j}^{T}v_{k}\beta_{k+1}^{T}\right\Vert  & = & \left\Vert q_{k+j}^{T}\left(I-W_{k}W_{k}^{T}\right)v_{k}\beta_{k+1}^{T}\right\Vert \nonumber \\
 & \leq & \left\Vert (I-W_{k}W_{k}^{T})v_{k}{\normalcolor {\color{black}\beta_{k+1}^{T}}}\right\Vert \label{eq:perturbation_norms2}
\end{eqnarray}
Therefore,  the perturbations $h_{k+j}$, $j\geq2$, remain small if the term 
\eqref{eq:perturbation_norms2} is sufficiently small.

Let us now discuss what properties of $Z_{m}^{(k)}$ could ensure
that the terms in (\ref{eq:perturbation_norms1}) and (\ref{eq:perturbation_norms2})
remain small. The first term in (\ref{eq:perturbation_norms1}) is
small if $v_{k+1}$ is nearly orthogonal to the selected Ritz vectors
stored in $Z_{m}^{(k)}$. For the term (\ref{eq:perturbation_norms2})
to be small, the columns of the block vector $v_{k}$ should lie approximately
in the space generated by the columns of $Z_{m}^{(k)}$. These two
sufficient conditions will form the basis of our selection criterion.

The interpretation of the second term in (\ref{eq:perturbation_norms1})
is nontrivial even in the single-vector case. Nevertheless, in that
setting, Greenbaum established an upper bound for this term based
on results of Paige, as summarized in \cite[Lemma (Paige),  p. 32]{Gr1989}.
In the block case, however, the problem of bounding this term remains
open.

Generally, there is no theory ensuring that there is a way to compose
$Z_{m}^{(k)}$ yielding small terms in (\ref{eq:perturbation_norms1})
and (\ref{eq:perturbation_norms2}). However, based on the experiments
presented in the following section, it seems that we can use a selection
criterion analogous to the single-vector case.

\section{Experiments \protect\label{sec:Experiments}}

In the previous section, we defined the continuation process (\ref{eq:CP_II}),
which leads to the construction of (\ref{eq:Grmodel}), and we stated
Theorem~\ref{thm:size_intervals}, providing a bound on the distance
between the eigenvalues of $T_{N}$ and those of $A$. When the perturbations
$\Delta\widetilde{V}_{N-k+1}$ are sufficiently small, the eigenvalues
of $T_{N}$ cluster around those of~$A$. The size of these perturbations
depends on the free parameter $W_{k}$ in the continuation process.
Although no general theory guarantees small perturbations for arbitrary
choices of $W_{k}$, we showed in the previous section that ensuring
small values for the quantities in (\ref{eq:perturbation_norms1})
and (\ref{eq:perturbation_norms2}) is sufficient. In this section,
we perform experiments with a criterion for selecting $W_{k}$ inspired
by the single-vector case, and check the sizes of (\ref{eq:perturbation_norms1})
and (\ref{eq:perturbation_norms2}) numerically. We also examine how
closely the eigenvalues of $T_{N}$ cluster around those of $A$.

\subsection{Construction of \texorpdfstring{$W_{k}$}{Wk}}
\label{subsec:ConstructionW}

Let $T_{k}$ be the block tridiagonal matrix obtained after $k$ iterations
of the finite precision block Lanczos algorithm applied to $A$ and
$v$. In the context of the continuation process (\ref{eq:CP_II}),
our goal is to define the matrix $W_{k}$ so that the quantities in
(\ref{eq:perturbation_norms1}) and (\ref{eq:perturbation_norms2})
remain small, which in turn leads to small perturbations $h_{k+j}$,
$j=0,1,\ldots$. Following the analogy with the single-vector case,
it is advantageous to construct $W_{k}$ from a carefully chosen subset
of Ritz vectors $Z_{m}^{(k)}$. Specifically, $W_{k}$ is taken as
the $Q$-factor from the QR factorization of $Z_{m}^{(k)}$. Since
there is no theoretical framework for choosing this subset in the
block case, we employ a heuristic criterion inspired by the single-vector
setting. We then verify the resulting sizes of terms in (\ref{eq:perturbation_norms1})
and (\ref{eq:perturbation_norms2}) through numerical experiments. 

From \cite{Pa1980}, it is known that in the single-vector Lanczos
algorithm, a well-separated cluster of Ritz values cannot be associated
with Ritz vectors that all have small norms. Building on this result,
Greenbaum introduced in \cite{Gr1989} the concept of cluster vectors,
representative vectors associated with clusters of Ritz values. When
selecting $Z_{m}^{(k)}$, Greenbaum used the unconverged Ritz vectors
for well-separated Ritz values, or unconverged cluster vectors for
well-separated clusters. Since there is currently no theoretical framework
for generalizing the notion of cluster vectors to the block case,
we adopt a simplified heuristic criterion for selecting $Z_{m}^{(k)}$,
\begin{equation}
\delta_{k,i}>\mu\|A\|,\label{eq:unconverged}
\end{equation}
where $\mu>0$ is a small constant.

Before turning to the experiments, we comment on the term (\ref{eq:perturbation_norms2}).
Let $T_{k}$ have the eigendecomposition~(\ref{def:spectral_decomp-T}),
and define the Ritz vectors of $T_{k}$ as $Z_{k}=V_{k}S_{k}$. Let
$Z_{m}^{(k)}=V_{k}S_{m}^{(k)}$ denote the subset of Ritz vectors
selected according to the criterion (\ref{eq:unconverged}) for a
given constant $\mu>0$. From the definition of the Ritz vectors,
the block vector $v_{k}{\color{black}\beta_{k+1}^{T}}$ can be expressed
as
\[
v_{k}{\normalcolor {\color{black}\beta_{k+1}^{T}}}=\sum_{i=1}^{kp}z_{i}^{(k)}\sigma_{p,i}^{(k)T}{\normalcolor {\color{black}\beta_{k+1}^{T}}},
\]
where $\sigma_{p,i}^{(k)}$ are defined as in (\ref{eq:RitzVecs}).
For the unselected Ritz vectors it holds that
\[
\|z_{i}^{(k)}\sigma_{p,i}^{(k)T}{\normalcolor {\color{black}\beta_{k+1}^{T}}}\|\leq\mu\|A\|\|z_{i}^{(k)}\|.
\]
In our numerical experiments, the norms of the Ritz vectors are never
significantly greater than one, and thus we may write
\[
v_{k}{\normalcolor {\color{black}\beta_{k+1}^{T}}}=\sum_{j=1}^{m}z_{i_{j}}^{(k)}\sigma_{p,i_{j}}^{(k)T}{\normalcolor {\color{black}\beta_{k+1}^{T}}}+\mathcal{O}(\mu)\|A\|,
\]
where $i_{1},\ldots,i_{m}$ are the indices of the selected Ritz vectors
that form the matrix $Z_{m}^{(k)}$. Therefore, if the Ritz vectors
are selected according to (\ref{eq:unconverged}), the term (\ref{eq:perturbation_norms2})
is of the order $\mathcal{O}(\mu)\|A\|$.

In the following experiment, our aim is to construct a matrix for
which improper clusters of Ritz values also appear during finite precision
computations; see Section~\ref{subsec:Clusters-not-approximating}.
Specifically, we choose $B$ as $\texttt{strakos48(0.001,1)}$ or
$\texttt{strakos48(0.1,100)}$, and use the initial vector $y=\texttt{randn(n,p)}$.
We first apply the single-vector Lanczos algorithm with double reorthogonalization
(to simulate exact arithmetic) to $B$ and $y$, producing a tridiagonal
matrix $\widetilde{T}_{s}$ that has the same eigenvalues as $B$,
where $s$ is the degree of $y$ with respect to $B$. We define the
test matrix as
\[
A=U\left(\widetilde{T}_{s}\otimes(I_{p}+\omega E)\right)U^{T},
\]
where $E\in\mathbb{R}^{p\times p}$ is a random matrix, $\omega=10^{-12}$
is a small perturbation parameter, and $U$ is a random orthonormal
matrix. The initial vector $v$ is defined as
\[
v=U\left(e_{1}\otimes I_{p}\right),
\]
where $e_{1}\in\mathbb{R}^{s}$ is the first column of the identity
matrix $I_{s}$. We refer to such a matrix $A$ as $\texttt{strakos48(0.001,1)}_{\otimes}$
or $\texttt{strakos48(0.1,100)}_{\otimes}$, respectively.

We now apply the block Lanczos algorithm to $\texttt{strakos48(0.001,1)}_{\otimes}$
and $\texttt{strakos48(0.1,100)}_{\otimes}$ and the initial vector
$v$ in finite precision arithmetic. In  Figure~\ref{img:experiment3},
we plot the quantities (\ref{eq:perturbation_norms1}) and (\ref{eq:perturbation_norms2})
for $p=2$, using the tolerance $\mu=10^{-5}$ in the selection criterion
(\ref{eq:unconverged}). The left part of the figure corresponds to
$\texttt{strakos48(0.001,1)}_{\otimes}$ and the right part to $\texttt{strakos48(0.1,100)}_{\otimes}$.
These quantities are compared with the threshold $\mu\|A\|$ (dashed
line). As observed, the plotted terms stay within the order of $\mu\|A\|$
for all iterations $k$. Decreasing the tolerance $\mu$ may lead
to issues with the selected Ritz vectors associated with Ritz values
in proper clusters. In such cases, the quantities
in~(\ref{eq:perturbation_norms1}) can significantly exceed $\mu\|A\|$.
Similar behavior can also be observed for $p>2$, though it was necessary
to increase the tolerance, possibly due to the unclear influence of
the parameter $p$. From our numerical experiments related to Section~\ref{subsec:Clusters-not-approximating},
we know that the Ritz vectors corresponding to improper clusters are
nearly orthogonal to both $v_{k+1}$ and to each other. Therefore,
they are not expected to significantly affect the quantities studied
in this experiment.

\begin{figure}
\includegraphics[width=0.48\textwidth]{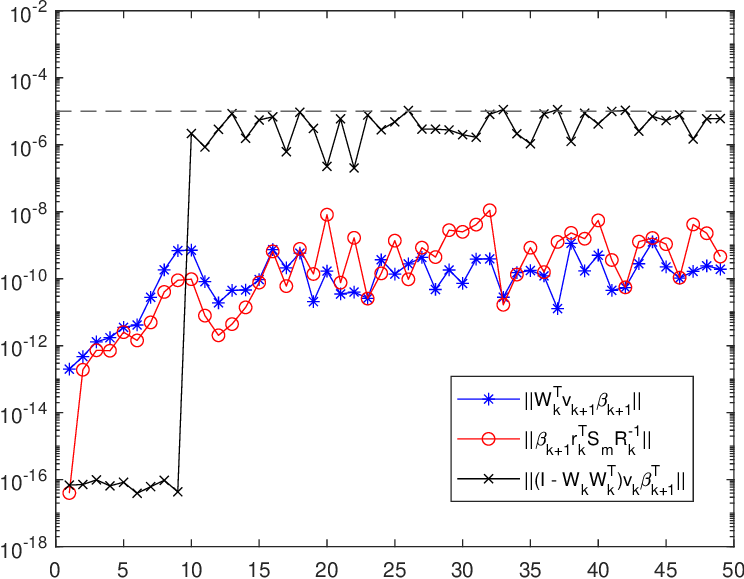}\includegraphics[width=0.48\textwidth]{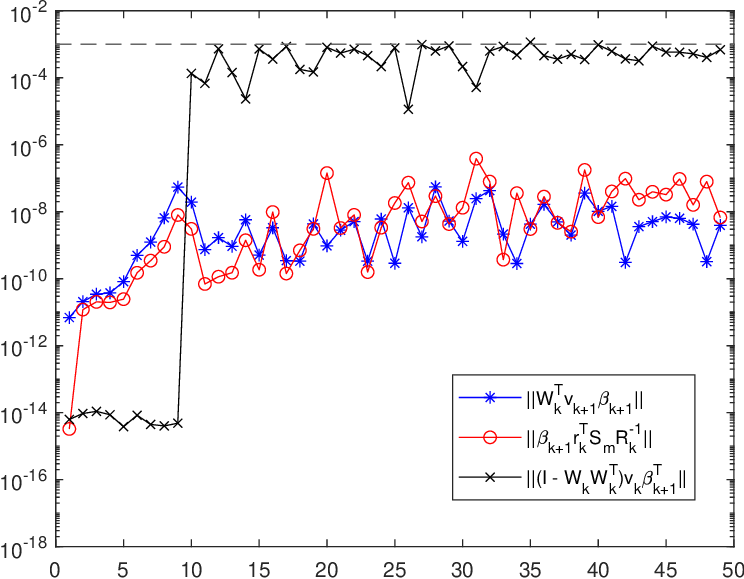}\caption{The terms (\ref{eq:perturbation_norms1}) and (\ref{eq:perturbation_norms2})
for $\texttt{strakos48(0.001,1)}_{\otimes}$ (left) and $\texttt{strakos48(0.1,100)}_{\otimes}$
(right), computed with the selection criterion (\ref{eq:unconverged})
for $\mu=10^{-5}$. The threshold $\mu\|A\|$ is indicated by a dashed
line. \protect\label{img:experiment3}}
\end{figure}

In the experiments, we found a tolerance $\mu$ for which the quantities
(\ref{eq:perturbation_norms1}) and (\ref{eq:perturbation_norms2})
are $\mathcal{O}(\mu\|A\|)$ for all inputs used in this paper. 

\subsection{The implementation of the continuation process \protect\label{subsec:Implement_CP}}

After $k$ iterations of the finite precision block Lanczos algorithm,
the matrix $T_{k}$ is obtained. A subset of the Ritz vectors is then
selected according to the criterion (\ref{eq:unconverged}), forming
the matrix $Z_{m}^{(k)}$. The matrix $W_{k}$ is subsequently computed
using the QR factorization of $Z_{m}^{(k)}$. We now describe how
the continuation process (\ref{eq:CP_II}), which produces $T_{N}$,
is implemented in MATLAB. It is important to note that the process
described by (\ref{eq:CP_II}) is a mathematical construction intended
to be carried out in exact arithmetic. Rather than using variable-precision
arithmetic to mimic exact computations, we employ numerical techniques
designed to ensure that the computed results closely approximate the
exact quantities.

To improve clarity, the recurrences in (\ref{eq:CP_II}) can be rewritten
in the form
\begin{align*}
q_{k+j}\beta_{k+j} & =\widetilde{w}_{k+j}-h_{k+j-1},\:j\geq1.
\end{align*}
The computation of $\widetilde{w}_{k+j}$, which represents the three-term
recurrence component of the continuation process, is implemented in
the same way as the three-term recurrence in Algorithm~\ref{alg:bLanczos}.
Each $\widetilde{w}_{k+j}$ is then reorthogonalized twice against
$W_{k}$ and, if applicable, against all previously computed block
vectors $q_{k+1},\ldots,q_{k+j-1}$. The resulting block vector is
denoted $w_{k+j}$. As mentioned above, an orthonormal basis of the
column space of $w_{k+j}$ must now be extracted. To achieve this,
we compute the economy-size singular value decomposition
\[
w_{k+j}=USV^{T}
\]
and discard singular values smaller than the tolerance $10^{-12}$.
This yields the truncated matrices $S_{t}$, $U_{t}$ and $V_{t}^{T}$,
and we define 
\[
q_{k+j}=U_{t},\quad\beta_{k+j}=S_{t}V_{t}^{T}.
\]
Finally, the sizes of the perturbations $h_{k+j-1}$ are computed
as 
\[
\|q_{k+j}\beta_{k+j}-\widetilde{w}_{k+j}\|.
\]
Note that in this implementation, the blocks $\beta_{k+j}$ are not
upper triangular; however, this does not pose any issues for our purposes.

\subsection{The matrix \texorpdfstring{$T_{N}$}{TN}}

In the previous section, we described how the matrix $T_{N}$ is obtained
using the continuation process. We now experimentally examine the
spread of its eigenvalues relative to those of $A$. In Section~\ref{subsec:ConstructionW},
we empirically determined that a tolerance of $\mu=10^{-5}$ in the
selection criterion (\ref{eq:unconverged}) ensures that the quantities
in (\ref{eq:perturbation_norms1}) and (\ref{eq:perturbation_norms2})
remain on the order of $\mu\|A\|$. Nevertheless, as we will now see,
the actual spread of the eigenvalues of $T_{N}$ around those of $A$
is typically a few orders of magnitude smaller than the bound $\epsilon_{2}\|A\|$
given in Theorem~\ref{thm:size_intervals}. 

In the first experiment we consider the matrix $A=\texttt{\ensuremath{\texttt{strakos48(0.001,1)}_{\otimes}}}$
with the same initial vector as in Section~\ref{subsec:ConstructionW}
and with $k=24$. The tolerance used in the selection criterion (\ref{eq:unconverged})
is set to $\mu=\sqrt{knp\epsilon}\approx10^{-6}$. The left part of
Figure~\ref{img:experiment4} shows the magnitudes of the perturbations
$h_{k+j}$ for $j=0,1,\dots,33$, compared with the threshold $\mu\|A\|$
(dashed line). The matrix $T_{N}$ was in this case of size $114\times114$.
The right part of the figure presents two plots: in the upper part,
the sizes of the intervals around the eigenvalues of $A$, normalized
by $\sqrt{\epsilon}\|A\|$, on a logarithmic scale;
and in the lower part, the number of eigenvalues of $T_{N}$ contained
in each interval. The largest interval sizes were on the order of
$10^{-12}$. 
\begin{figure}
\includegraphics[width=0.48\textwidth]{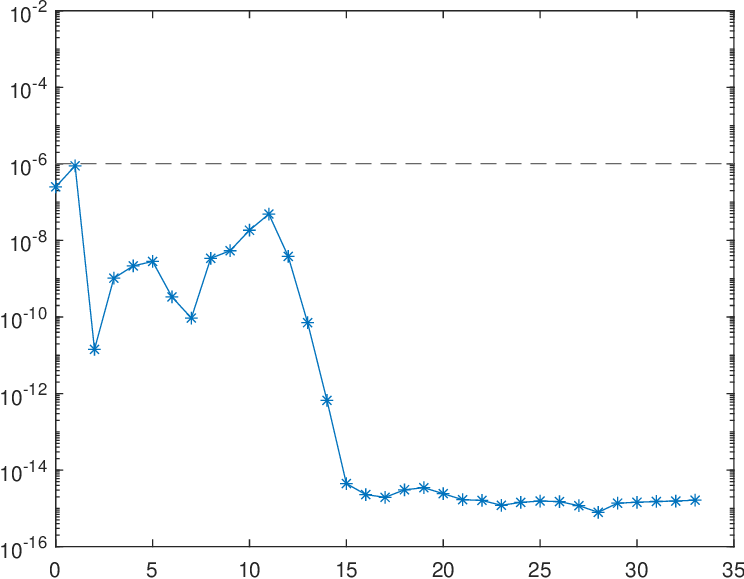}\,\,\includegraphics[width=0.48\textwidth]{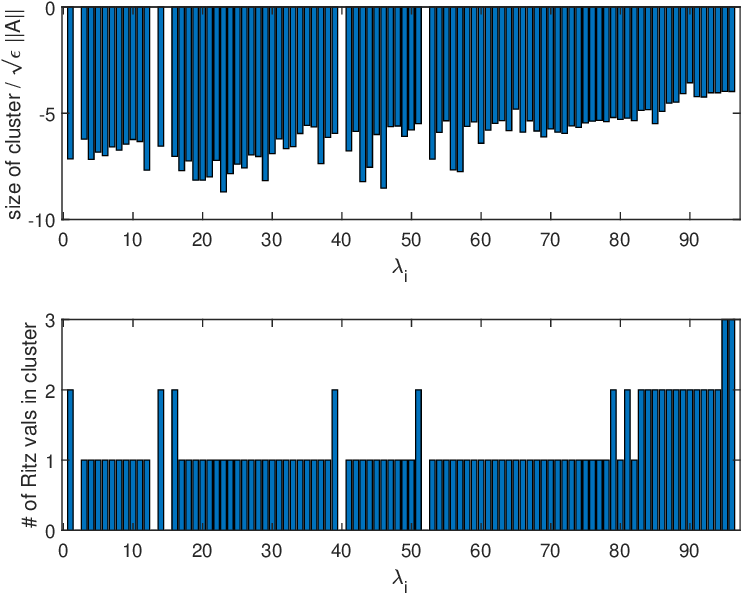}\caption{Experiment for the matrix $\texttt{strakos48(0.001,1)}_{\otimes}$
for $\mu=\sqrt{knp\epsilon}\approx10^{-6}$. Left: Norms of $h_{k+j}$
from the continuation process. Right top: Widths of the intervals
around the eigenvalues of $A$ that contain the eigenvalues of $T_{N}$,
normalized by $\sqrt{\epsilon}\|A\|$, on a logarithmic scale. Right bottom:
Number of eigenvalues of $T_{N}$ contained in each interval. \protect\label{img:experiment4}}
\end{figure}
In the second experiment, we use the matrix $A=\texttt{\ensuremath{\texttt{strakos48(0.1,100)}_{\otimes}}}$
also with the same initial vector as in Section~\ref{subsec:ConstructionW}
and with $k=24$. In this case we used a tolerance $\mu=10^{-5}$.
Figure~\ref{img:experiment5} displays the same quantities as in
the previous case. Here, the continuation process required $37$ iterations,
i.e., $j=0,\dots,36$, and produced a matrix $T_{N}$ of size $119\times119$.
The maximum interval size observed was on the order of $10^{-9}$.

In our experiments, the assumption (\ref{eq:assumtion}) in Theorem~\ref{thm:size_intervals}
was always satisfied, with the value of $\epsilon_{1}$ being at least
several orders of magnitude smaller than $\epsilon_{2}$. 

\begin{figure}
\includegraphics[width=0.48\textwidth]{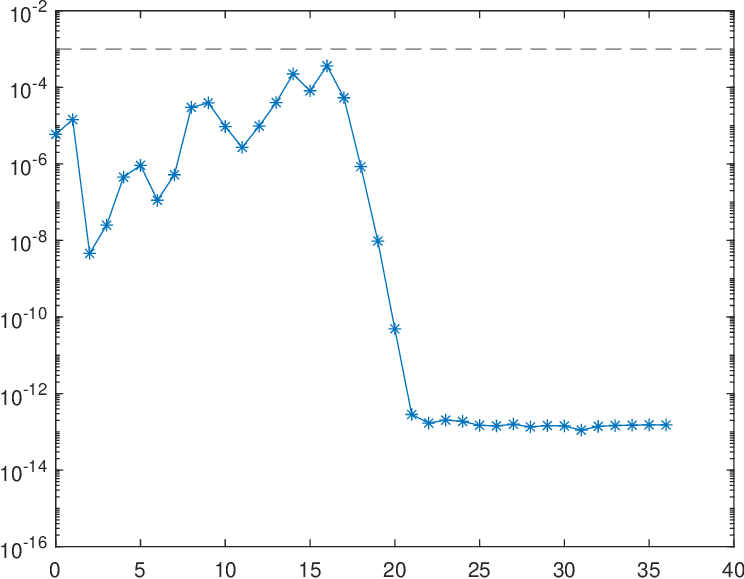}\,\,\includegraphics[width=0.48\textwidth]{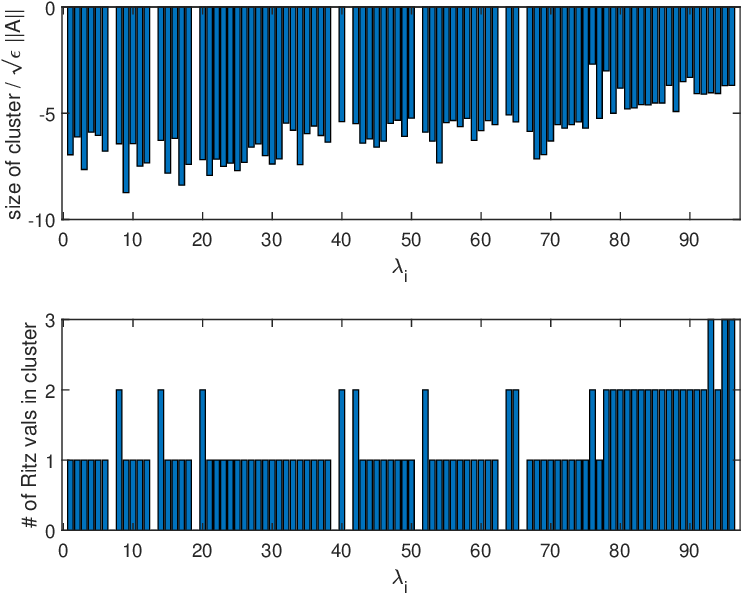}\caption{Experiment for the matrix $\texttt{strakos48(0.1,100)}_{\otimes}$
for $\mu=10^{-5}$. Left: Norms of $h_{k+j}$ from the continuation
process. Right top: Widths of the intervals around the eigenvalues
of $A$ that contain the eigenvalues of $T_{N}$,
normalized by $\sqrt{\epsilon}\|A\|$, on a logarithmic scale. Right bottom:
Number of eigenvalues of $T_{N}$ contained in each interval. \protect\label{img:experiment5}}
\end{figure}

\section{Conclusions}

The block Lanczos algorithm uses block operations that exploit modern
hardware and operates on a richer Krylov subspace, often yielding
faster convergence of the Ritz values to the eigenvalues than its
single-vector counterpart. However, unlike the single-vector case,
its behavior in finite precision arithmetic remains poorly understood.

Our goal was to extend the results introduced by Greenbaum \cite{Gr1989}
to the block setting. We reproduced the key experiment of Greenbaum
and Strakoš~\cite{GrSt1992} in the block setting. This experiment
indicates that the finite precision block Lanczos algorithm could
behave similarly as the exact block Lanczos algorithm applied to a
larger matrix whose eigenvalues are close to those of $A$. This observation
support the idea of backward-like stability of the block Lanczos algorithm,
analogous to that known for the single-vector algorithm \cite{Gr1989}.

In this paper, we generalized Greenbaum\textquoteright s continuation
process to the block setting. After performing $k$ finite precision
block Lanczos iterations, we continue the recurrences with carefully
designed perturbations so that the process terminates with $\beta_{N+1}=0$,
yielding a final block tridiagonal matrix $T_{N}$. Under an additional
assumption, Theorem~\ref{thm:size_intervals} shows that if the perturbations
are sufficiently small, the eigenvalues of $T_{N}$ cluster tightly
around those of $A$. A key open question is how to select the free
matrix parameter $W_{k}$ so that the designed perturbations indeed
remain small during the continuation process.
In the single-vector setting, Greenbaum leveraged Paige\textquoteright s
analysis \cite{Pa1980} to justify the construction of $W_{k}$. However,
to the best of our knowledge, Paige\textquoteright s results have
not been generalized to the block case. Therefore, we proposed an
empirical strategy: construct $W_{k}$ from a subset of Ritz vectors
that satisfy some sufficient conditions, using a simplified selection
criterion inspired by the single-vector case. We found parameters
such that the sufficient conditions were fulfilled with a tolerance
of order $10^{-5}$, while the observed spread of the eigenvalues
of $T_{N}$ around the eigenvalues of $A$ was typically $\mathcal{O}(\sqrt{\epsilon})\|A\|$,
and often even smaller. 

Our findings suggest that with an appropriate $W_{k}$, finite precision
block Lanczos computations can be viewed as the results of the exact
block Lanczos algorithm applied to a larger matrix. The eigenvalues
of this larger matrix lie in intervals of size $\mathcal{O}(\sqrt{\epsilon})\|A\|$
around the eigenvalues of $A$. A rigorous justification of this interpretation
would require block analogues of Paige\textquoteright s classical
results. At the same time, we believe that the results presented in
this paper provide a motivation for further analysis of the finite
precision behavior of the block Lanczos algorithm. Both our theoretical
developments and numerical experiments highlight which properties
are likely to extend to the block setting. A natural starting point
for a deeper analysis is a better understanding of how Ritz values
interlace for block tridiagonal matrices. Although we have formulated
and numerically supported a conjecture in this direction, establishing
a complete proof remains an interesting challenge for future research.

%\bibliography{references}

\end{document}